\newtheorem{theorem}{Theorem}[section]
\newtheorem{lemma}[theorem]{Lemma}
\newtheorem{proposition}[theorem]{Proposition}
\newtheorem{corollary}[theorem]{Corollary}
\theoremstyle{definition}
\theoremstyle{remark}
\newtheorem{remark}[theorem]{Remark}
\numberwithin{equation}{section}
\newcommand{\R}{{\mathbb R}}
\newcommand{\Z}{{\mathbb Z}}
\newcommand{\e}{\epsilon}
\title[Parabolic Problem]{A parabolic problem with a fractional time derivative}
\author{Mark Allen}
\address{Department of Mathematics, The University of Texas at Austin,
Austin, TX 78712, USA}
\email{mallen@math.utexas.edu}
\author{Luis Caffarelli}
\address{Department of Mathematics, The University of Texas at Austin,
Austin, TX 78712, USA}
\email{caffarel@math.utexas.edu}
\author{Alexis Vasseur}
\address{Department of Mathematics, The University of Texas at Austin,
Austin, TX 78712, USA}
\email{vasseur@math.utexas.edu}
\begin{document}

\begin{abstract}
 We study regularity for a parabolic problem with fractional diffusion in space and a fractional time derivative. 
 Our main result is a De Giorgi-Nash-Moser H\"older regularity theorem for solutions in a divergence form equation.
 We also prove results regarding existence, uniqueness, and higher regularity in time. 
\end{abstract}

\maketitle

\section{Introduction}
 In this paper we study the regularity of nonlocal evolution equations with ``measurable'' coefficients in space and
 a fractional time derivative. Anomolous diffusion equations are of great interest in physics. Fractional 
 diffusion operators can arise in the context of levy flights, see for instance the fractional kinetic equations 
 in \cite{z02}. 
 Fractional kinetic equations can also be derived from the context of random walks. A fractional diffusion operator
 corresponds to a diverging jump length variance in the random walk, and a fractional time derivative arises when
 the characteristic waiting time diverges, see \cite{mk00}. A fractional time derivative models situations in
 which there is ``memory''. 
 
 The problem we consider uses the Caputo fractional time derivative. The caputo derivative has been used recently
 to model fractional diffusion in plasma turbulence, see \cite{d04} and \cite{d05}. In the physical model
 in \cite{d05} the Caputo derivative accounts for the trapping effect of turbulent eddies. 
 
 Another advantage of using the Caputo 
 derivative in modeling physical problems is that the Caputo derivative of constant functions is zero. 
 Thus, time-indepent solutions are also solutions of the time-dependent problem. We note this is not the
 case for the Riemann-Liouville time derivative. 
 
 The specific equation we study is 
  \begin{equation}  \label{e:first}
   _{a}^cD_t^{\alpha} w(t,x) = \int[w(t,y)-w(t,x)]K(t,x,y)dy+f(t,x).
  \end{equation}
 We impose a symmetry condition on the kernel $K$: 
  \[
   K(t,x,y)=K(t,y,x) \quad \text{ for any } \quad x\neq y.
  \]
 We also assume an ellipticity condition. We assume there exists $0< \sigma < 2$ and $0< \Lambda$, such that
  \begin{equation}  \label{e:elliptic}
   \chi_{\{|x-y|\leq 3\}} \frac{1}{\Lambda}|x-y|^{-(N+\sigma)} 
    \leq K(t,x,y) 
     \leq  \Lambda|x-y|^{-(N+\sigma)},
  \end{equation}
 where $N$ is the spatial dimension. 
 The symmetry of the kernel $K$ allows us to consider the divergence form  
  \begin{equation}  \label{e:e}
   \begin{aligned}
    &\int_a^T \int_{\R^N} \int_{\R^N}K(t,x,y)[w(t,x)-w(t,y)][\phi(t,x)-\phi(t,y)] \ dx \ dy \ dt \\
    &\quad +\int_a^T \int_{\R^N} \  _{a}^cD_t^{\alpha} w(t,x) \phi(t,x) =\int_a^T\int_{\R^n}f(t,x)\phi(t,x).
   \end{aligned}
  \end{equation}
 for every $\phi \in C_0^1(\R^n) \times C^1(0,T)$. We also assume $f \in L^{\infty}$. This assumption is most likely
 not optimal; however, we assume $f \in L^{\infty}$ to make clear the method. Also, our interest in 
 $f \in L^{\infty}$ is to be able to prove higher regularity in time in Section \ref{s:highertime}. 
 Furthermore, the right hand side in the physical model in \cite{d05} is in $L^{\infty}$.

 A very similar problem with zero right hand side was recently studied by the second and third author in \cite{ccv11} 
 with the standard local time derivative. In that paper
 the original method of De Giorgi was used to prove boundedness of solutions and local H\"older regularity. 
 In this paper we use a similar approach to prove apriori local H\"older estimates of solutions to 
 \eqref{e:e}. We follow the De Giorgi method as in \cite{ccv11} but take into account the fractional time 
 derivative. In the case of the second Lemma in Section \ref{s:lemma2}, we utilize the fractional derivative
 to give a simpler proof. However, by utilizing the fractional nature of the derivative our estimates do 
 not remain uniform as $\alpha \to 1$. 
 
 We note that a similar problem to \eqref{e:e} was recently studied by Zacher in \cite{z13}. That problem 
 had a local diffusion equation in divergence form with a Riemann-Liouville fractional time derivative and zero
 right hand side. 
 \subsection{Overview of the Main results}
  The main result of this paper is a De Giorgi-Nash-Moser type H\"older regularity theorem 
   (see Theorem \ref{t:holder}) for a certain class of weak solutions defined in \eqref{e:w}. These solutions are
   weak solutions in space and very weak in time. If $w$ is a solution to \eqref{e:w} it is not known a priori whether
   cut-offs of $w$ are valid test functions. To utilize energy estimate techniques we therefore consider a sequence of
   discretized-in-time approximating solutions which will be strong in time. These approximating solutions will also 
   prove existence of solutions (see the appendix.) We utilize the ideas of De Giorgi's first and second lemmas applied
   to the approximating solutions. In the limit we obtain the desired H\"older regularity.
    
   There are several consequences of the H\"older regularity. If the kernel $K$ and right hand side $f$ are 
   sufficiently regular in time, then solutions to \eqref{e:w} will have higher regularity in time (see 
   Theorem \ref{t:smooth}). Under additional regularity conditions - in space -  at the intial time $t=a$, solutions
   to \eqref{e:w} are continuous up to the initial time (Lemma \ref{l:initime}), and hence we are able to show
   that such solutions will also be strong in time and solutions to \eqref{e:e} (see Corollary \ref{c:strong}). 
   For such strong conditions we obtain the usual corollary of uniqueness (Corollary \ref{c:unique}).   
   
   A further application of Theorem \ref{t:holder} is in regards to the equation
    \[
     -\int_{\R^n} F'(\theta(y)-\theta(x))K(y-x) dy = 0
    \]
   which arises as the Euler-Lagrange equation of the variational integral
    \[
     V(\theta) = \int_{\R^n} \int_{\R^n} F(\theta(y) -\theta(x))K(y-x) \ dy \ dx.
    \]
   $F:\R \to [0,\infty)$ is assumed to be even, convex, $C^2(\R)$ and satisfy $F(0)=0$ and 
   $\Lambda^{-1/2} \leq F''(x) \leq \Lambda^{1/2}$. $K(x)=K(-x)$ and satisfies
    \[
     \chi_{\{ |x|\leq 3 \}} \frac{\Lambda^{-1/2}}{|x|^{n+\sigma}} \leq K(x) \leq \frac{\Lambda^{1/2}}{|x|^{n+\sigma}}.
    \]
   Our results apply to the generalized time-dependent solution with Lipschitz right hand side
    \begin{equation}  \label{e:19}
     _a D_t^{\alpha} - \int_{\R^n} \phi'(\theta(t,y)-\theta(t,x))K(y-x) dy =f(t,x).  
    \end{equation}
   By a change of variables and differentiating in space (see \cite{ccv11} for details)
   we obtain that $D_e \theta$ is a solution to \eqref{e:e}
   and hence H\"older continuous. Therefore, we obtain that $\nabla \theta$ is H\"older continuous.

 \subsection{Outline} 
  The outline of this paper will be as follows
  
   - In Section \ref{s:caputo} we state some properties of the Caputo derivative. 
   
   - In Section \ref{s:discretize} we give a discretized version of the Caputo derivative. The discretized
   version will be useful to both prove existence and to obtain our regularity results. Since our solutions
   are ``very weak'' in time the solution itself is not a valid test function. This is overcome by approximating
   via discretized solutions which are ``strong'' in time. 
   
   - In Section \ref{s:lemma1} we prove the first De Giorgi Lemma or ``$L^2, L^{\infty}$'' estimate. 
   
   - In Section \ref{s:lemma2} we prove the second De Giorgi Lemma. We utilize the nonlocal nature of the
   time derivative to simplify the proof. 
   
   - In Section \ref{s:Holder} we utilize the first and second De Giorgi Lemmas to prove the decrease in 
   oscillation and obtain H\"older regularity. 
   
   - In Section \ref{s:highertime} we prove higher regularity in time for solutions with appropriate conditions
   for the kernel $K$ and right hand side $f$. 
   
   - In the appendix we provide the details and prove existence of solutions via the discretized approximations.

 \subsection{Notation}
  We list here the notation that will be used consistently throughout the paper. The following letters are 
  fixed throughout the paper and always refer to:
   \begin{itemize}

   \item  $\alpha$ - the order of the Caputo derivative.
   
   \item  $\sigma$ - the order of the spatial fractional operator associated to the kernel $K(t,x,y)$. We use $\sigma$
    for the order because $s$ will always be a variable for time. 
   
   \item $a$ -  the initial time for which our equation is defined.
   
   \item $_a\partial_t^{\alpha}$ - the rescaled Caputo derivative as defined in Section \ref{s:caputo}.
   
   \item $\e$ - will always refer to the time length of the discrete approximations 
   as defined in Section \ref{s:discretize}
   
   \item $n$ - will always refer to the space dimension. 
           
 \end{itemize}

\section{Caputo Derivative} \label{s:caputo}
 In this section we state various properties of the Caputo derivative that will be useful.
 The Caputo derivative for $0<\alpha<1$ is defined by 
  \[
   _{a}^cD_{t}^{\alpha} u(t) := \frac{1}{\Gamma(1-\alpha)} \int_{a}^{t} \frac{u'(s)}{(t-s)^{\alpha}} \ ds
  \]
 For the remainder of the paper we will drop the superscript $c$ and understand that throughout
 the paper the fractional derivative is the Caputo derivative. 
 By using integration by parts we have the alternative formula
  \begin{equation} \label{e:fracalt}
    \Gamma(1-\alpha) \ _{a}D_{t}^{\alpha} u(t) = 
   \frac{u(t)-u(a)}{(t-a)^{\alpha}} + \alpha \int_{a}^t \frac{u(t)-u(s)}{(t-s)^{\alpha + 1}} \ ds
  \end{equation}
 For notational simplicity we use the rescaled Caputo derivative 
 $\partial_t^{\alpha} := \Gamma(1-\alpha) D_t^{\alpha}$ to avoid writing $\Gamma(1-\alpha)^{-1}$ 
 in all of our formulas. Also, for the remainder of the 
 paper we will drop the subscript $a$ on $_{a}\partial_t^\alpha$ when 
 the initial point $a$ is understood.  
  
 
 For a function $g(t)$ defined on $[a,t]$ and working with $\partial_t^{\alpha}$, 
 there are two advantageous ways of defining $g(t)$ for $t<a$. The first way is to define $g(t)\equiv 0$ 
 for $t<a$. The second way is to define $g(t)\equiv g(a)$ for $t<a$. When using the latter definition we
 note that
  \[
   _a \partial_t^{\alpha} g(t) = \  _{-\infty}\partial_t^{\alpha} g(t)
     = \alpha \int_{-\infty}^t \frac{g(t)-g(s)}{(t-s)^{1+\alpha}}.
  \]
 This looks very similar to $(-\Delta)^{\alpha}$ except the integration only occurs for $s<t$. In 
 this manner the Caputo derivative retains directional derivative behavior while at the same time sharing
 certain properties with $(-\Delta)^{\alpha}$. This is perhaps best illustrated by the following
 integration by parts formula for the Caputo derivative:  
 
 \begin{proposition}  \label{p:changev}
  Let $g,h\in C^1(a,T)$. Then 
   \[
    \begin{aligned}
    \int_{a}^T g\partial_t^{\alpha}h + h\partial_t^{\alpha} g &= 
     \int_{a}^T {g(t)h(t) \left[\frac{1}{(T-t)^{\alpha}} + \frac{1}{(t-a)^{\alpha}} \right]} \ dt \\
     & \quad + \alpha \int_{a}^T \int_{a}^t \frac{(g(t)-g(s))(h(t)-h(s))}{(t-s)^{1+\alpha}} \ ds \ dt \\
     & \quad -   \int_{a}^T \frac{g(t)h(a)+h(t)g(a)}{(t-a)^{\alpha}} \ dt.
    \end{aligned}
   \]
 \end{proposition}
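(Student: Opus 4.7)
The plan is to expand both $g\,\partial_t^\alpha h$ and $h\,\partial_t^\alpha g$ using the alternative representation \eqref{e:fracalt}, add them, and integrate over $[a,T]$. This immediately produces the boundary-like piece
\[
\int_a^T \frac{2g(t)h(t) - g(t)h(a) - h(t)g(a)}{(t-a)^\alpha}\,dt
\]
together with the double integral
\[
\alpha \int_a^T\!\int_a^t \frac{g(t)(h(t)-h(s)) + h(t)(g(t)-g(s))}{(t-s)^{\alpha+1}}\,ds\,dt.
\]
The first piece already accounts for the $-\int_a^T\frac{g(t)h(a)+h(t)g(a)}{(t-a)^\alpha}\,dt$ in the target formula, with an extra $\int_a^T \frac{2g(t)h(t)}{(t-a)^\alpha}\,dt$ that will need to be reconciled.

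Next I would apply the algebraic identity
\[
g(t)(h(t)-h(s)) + h(t)(g(t)-g(s)) = (g(t)-g(s))(h(t)-h(s)) + F(t) - F(s),
\]
where $F(t):=g(t)h(t)$. This splits the double integral into the desired symmetric term $\alpha\int_a^T\!\int_a^t (g(t)-g(s))(h(t)-h(s))(t-s)^{-1-\alpha}\,ds\,dt$ plus a leftover
\[
J \ :=\ \alpha \int_a^T\!\int_a^t \frac{F(t)-F(s)}{(t-s)^{\alpha+1}}\,ds\,dt.
\]
What remains is to show that $J = \int_a^T F(t)\bigl[(T-t)^{-\alpha} - (t-a)^{-\alpha}\bigr]\,dt$, which combines with the extra $\int_a^T \frac{2gh}{(t-a)^\alpha}\,dt$ from step one to produce exactly $\int_a^T g(t)h(t)\bigl[(T-t)^{-\alpha}+(t-a)^{-\alpha}\bigr]\,dt$.

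The main obstacle is evaluating $J$, because naively splitting the numerator $F(t)-F(s)$ leads to divergent pieces — the kernel $(t-s)^{-\alpha-1}$ is not integrable at the diagonal. I would regularize by cutting the inner integral at $s = t-\delta$, obtaining a truncated $J_\delta$ in which the split is legitimate. Elementary antiderivatives give $\alpha\int_a^{t-\delta}(t-s)^{-\alpha-1}\,ds = \delta^{-\alpha} - (t-a)^{-\alpha}$, and Fubini on the other piece yields $\alpha\int_{s+\delta}^T(t-s)^{-\alpha-1}\,dt = \delta^{-\alpha} - (T-s)^{-\alpha}$. The $\delta^{-\alpha}$ contributions combine into $\delta^{-\alpha}\int_{T-\delta}^T F(t)\,dt$, which is $O(\delta^{1-\alpha})\to 0$ since $\alpha<1$ and $F\in C^1$. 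Sending $\delta\to 0$ in what remains gives precisely the claimed formula for $J$. The $C^1$ regularity of $g,h$ on $(a,T)$ is used both to justify $J=\lim J_\delta$ (the integrand in $J$ is $O((t-s)^{1-\alpha})$ near the diagonal) and to kill the $\delta^{-\alpha}$ term; once these issues are handled, reassembling the pieces finishes the proof.
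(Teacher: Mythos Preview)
Your approach is correct and mirrors what the paper indicates (it omits the proof but points to the discrete analogue, Lemma~\ref{l:decompose}): the same algebraic identity splitting off $(g(t)-g(s))(h(t)-h(s))$ plus $F(t)-F(s)$ with $F=gh$, followed by a Fubini-type evaluation of the leftover piece. One minor bookkeeping slip: the $\delta^{-\alpha}$ contributions combine to $\delta^{-\alpha}\bigl[\int_{T-\delta}^{T}F-\int_{a}^{a+\delta}F\bigr]$ rather than just $\delta^{-\alpha}\int_{T-\delta}^{T}F$, but both pieces are $O(\delta^{1-\alpha})$ and your conclusion is unaffected.
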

  We remark that the integration by parts formula above seems to be a combination of 
   \[
      \int_{\R} g(t)(-\Delta)^{\alpha} h(t) = \int_{\R} (-\Delta)^{\alpha/2} g(t) (-\Delta)^{\alpha/2} h(t) 
   \]
  and
   \[   
       \int_{a}^T g(t) \partial_t h(t) + h(t)\partial_t g(t) =g(T)h(t)-g(a)h(a). 
   \] 
 We omit the proof of the above Proposition since we do not use the Proposition 
 and since the proof is very similar to the proof of Lemma \ref{l:decompose}.

  With an integration by parts formula in hand we give the exact formulation of solutions which we study. 
  We assume $\phi \in C_0^2(\R^n) \times C^{1}([a,T])$.
  \begin{equation}   \label{e:w}
   \begin{aligned}
    &\int_{\R^n}\int_{a}^T {w(t,x)\phi(t,x) \left[\frac{1}{(T-t)^{\alpha}} 
         + \frac{1}{(t-a)^{\alpha}} \right]} \ dt \ dx \\
    & \  + \alpha \int_{\R^n} \int_{a}^T \int_{a}^t 
          \frac{(w(t,x)-w(s,x))(\phi(t,x)-\phi(s,x))}{(t-s)^{1+\alpha}} \ ds \ dt  \ dx \\ 
     &\ -  \int_{\R^n}\int_{a}^T \frac{\phi(t,x)w(a,x)+\phi(0,x)w(t,x)}{(t-a)^{\alpha}} \ dt \\
     &\   - \int_{\R^n}\int_a^T w(t,x) \partial_t^{\alpha} \phi(t,x) \ dt \ dx\\
     &\  +\int_a^T \int_{\R^n} \int_{\R^n} K(t,x,y)[w(t,x)-w(t,y)][\phi(t,x)-\phi(t,y)] \ dx \ dy \ dt \\
     & \qquad = \int_a^T \int_{\R^n} f(t,x) \phi(x,t). 
   \end{aligned}
  \end{equation}
 
  

 For a function $u(t)$ defined on $(a,T)$, defining $u(t)=0$ for $t<a$ is 
 useful when obtaining an energy estimate as follows
 \begin{lemma}  \label{l:ext}
  Let $u \in C([a,T])$. If we extend $u$ to all of $\R$ by having $u(t)=0$ for $t<a$ and 
  then reflecting evenly across $T$, we obtain
   \[
    \alpha \int_{\R} \int_{\R} \frac{|u(t)-u(s)|^2}{|t-s|^{1+\alpha}} \ ds \ dt \leq 
    8 \left( \alpha \int_{a}^T \int_{a}^t \frac{|u(t)-u(s)|^2}{|t-s|^{1+\alpha}} \ ds \ dt +
    \int_a^T \frac{u^2(t)}{(t-a)^{\alpha}} \right)
   \]
 \end{lemma}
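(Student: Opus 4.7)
The plan is to decompose $\R\times\R$ according to the support structure of $\tilde u$ and exploit the reflection symmetry about $T$. I would set $b:=2T-a$, observe that $\tilde u$ is supported in $[a,b]$, and note that pairs $(t,s)\in([a,b]^c)^2$ contribute nothing to
\[
E:=\alpha\int_{\R}\int_{\R}\frac{|\tilde u(t)-\tilde u(s)|^2}{|t-s|^{1+\alpha}}\,ds\,dt.
\]
So $E=E_{\mathrm{in}}+E_{\mathrm{out}}$, where $E_{\mathrm{in}}$ is the integral over $[a,b]^2$ and $E_{\mathrm{out}}$ is twice the integral over $[a,b]\times[a,b]^c$ (the factor of two coming from adding the symmetric contribution from $[a,b]^c\times[a,b]$).

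Next I would subdivide $[a,b]^2$ by splitting at $T$. Let
\[
E_{11}:=\alpha\iint_{[a,T]^2}\frac{|u(t)-u(s)|^2}{|t-s|^{1+\alpha}}\,ds\,dt
\]
be the contribution from $[a,T]^2$; the reflection identity $\tilde u(2T-\cdot)=\tilde u(\cdot)$ and the substitution $(t,s)\mapsto(2T-t,2T-s)$ show that the $[T,b]^2$-piece also equals $E_{11}$. For the two cross rectangles, applying the reflection only in the second coordinate collapses them into
\[
E_{12}=2\alpha\iint_{[a,T]^2}\frac{|u(t)-u(s)|^2}{(2T-t-s)^{1+\alpha}}\,ds\,dt,
\]
and the elementary inequality $2T-t-s=(T-t)+(T-s)\geq|t-s|$ for $t,s\in[a,T]$ gives $E_{12}\leq 2E_{11}$. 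Therefore $E_{\mathrm{in}}\leq 4E_{11}=8A$, where $A$ denotes the first term on the right-hand side of the lemma (note $E_{11}=2A$ by symmetry in $t,s$).

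For $E_{\mathrm{out}}$, the inner $s$-integral over $[a,b]^c$ evaluates explicitly to $\alpha^{-1}\bigl[(t-a)^{-\alpha}+(b-t)^{-\alpha}\bigr]$. Splitting the outer $t$-integral at $T$ and folding the upper half onto $[a,T]$ via $t\mapsto 2T-t$ yields
\[
E_{\mathrm{out}}=4\int_a^T u^2(t)\left[\frac{1}{(t-a)^\alpha}+\frac{1}{(b-t)^\alpha}\right]dt.
\]
Since $b-t\geq T-a\geq t-a$ on $[a,T]$, the second term is dominated by the first, so $E_{\mathrm{out}}\leq 8B$ with $B$ the second term on the right-hand side of the lemma. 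Adding the estimates gives the claim. The only delicate step is the control of the cross term $E_{12}$: the reflection across $T$ (rather than extension by zero there) was chosen precisely so that $\tilde u$ remains continuous at $T$, preventing any boundary contribution at $T$, and the resulting cross integrand is then dominated pointwise by the diagonal one via $(T-t)+(T-s)\geq|t-s|$.
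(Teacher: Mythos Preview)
Your proof is correct and follows essentially the same route as the paper's: decompose into the interior square $[a,2T-a]^2$ and the exterior strips, use the reflection symmetry about $T$ to reduce the interior to the $[a,T]^2$-piece (controlling the cross rectangles via $(T-t)+(T-s)\ge|t-s|$), and evaluate the exterior $s$-integrals explicitly to produce the weight $(t-a)^{-\alpha}$. If anything, you spell out the cross-term bound $E_{12}\le 2E_{11}$ more explicitly than the paper, which simply asserts the factor~$8$ for the interior and handles one of the four exterior pieces while declaring the rest analogous.
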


 \begin{proof}
  We first note that since the integrand is symmetric in $s$ and $t$ we have 
   \[
    \int_{a}^T \int_{a}^T \frac{|u(t)-u(s)|^2}{|t-s|^{1+\alpha}} \ ds \ dt = 
     2\int_{a}^T \int_{a}^t \frac{|u(t)-u(s)|^2}{|t-s|^{1+\alpha}} \ ds \ dt.
   \]
  By the even reflection across the point $T$ we then have 
   \[
    \int_{a}^{2T-a} \int_{a}^{2T-a} \frac{|u(t)-u(s)|^2}{|t-s|^{1+\alpha}} \ ds \ dt \leq 
     8 \int_{a}^T \int_{a}^t \frac{|u(t)-u(s)|^2}{|t-s|^{1+\alpha}} \ ds \ dt.
   \]
  Since $u(t)=0$ if $|t|\leq|a|$ we only have to consider when $t \in (a,2T-a) , s \notin (a,2T-a)$ and vice-versa.
   \[
    \begin{aligned}
    \alpha \int_a^{2T-a} \int_{-\infty}^a \frac{|u(t)-u(s)|^2}{|t-s|^{1+\alpha}} \ ds \ dt &=
      \alpha \int_a^{2T-a} u^2(t) \int_{-\infty}^a |t-s|^{-1-\alpha} \ ds \ dt \\
      &= \int_a^{2T-a} \frac{u^2(t)}{(t-a)^{\alpha}} \ dt \\
      &\leq 2\int_a^{T} \frac{u^2(t)}{(t-a)^{\alpha}} \ dt. 
    \end{aligned}
   \] 
  The other three remaining pieces of integration are bounded exactly in the same manner.  
 \end{proof} 

 We will later need the following estimate
 \begin{lemma}  \label{l:fracbound}
  Let 
   \[
     h(t):= \max \{ |t|^{\nu} -1 , 0 \} \\
   \] 
    with $\nu < \alpha$. Then
   \[
    \partial_t^\alpha h \geq  -c_{\nu,\alpha}
   \]
  for $t \in \R$. Here, $c_{\nu,\alpha}$ is a constant depending only on $\alpha , \nu$. 
 \end{lemma}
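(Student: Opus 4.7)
The plan is to compute $\partial_t^\alpha h(t) = \alpha \int_{-\infty}^t \frac{h(t)-h(s)}{(t-s)^{1+\alpha}}\,ds$ directly from the definition. This formula is well-defined because the only issue is at $s\to-\infty$, where $h(t)-h(s)\sim-|s|^\nu$ and $(t-s)^{-1-\alpha}\sim|s|^{-1-\alpha}$, giving an integrand of order $|s|^{\nu-1-\alpha}$; this is integrable precisely because $\nu<\alpha$. (For $\nu\le 0$ one has $h\equiv 0$, so we may assume $\nu>0$.) Having established integrability, I would split into three cases based on the position of $t$ relative to the support of $h$.

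For $t\le -1$, every $s\le t$ satisfies $|s|\ge|t|\ge 1$, so $h(t)-h(s)=|t|^\nu-|s|^\nu\le 0$. Substituting $u=-s$ and then $u=|t|r$ yields
\[
\partial_t^\alpha h(t) = \alpha\,|t|^{\nu-\alpha}\int_1^\infty\frac{1-r^\nu}{(r-1)^{1+\alpha}}\,dr.
\]
The integral converges ($\alpha<1$ near $r=1$, and $\nu<\alpha$ near infinity) to some negative constant $J_{\nu,\alpha}$; since $\nu-\alpha<0$ gives $|t|^{\nu-\alpha}\le 1$, one obtains $\partial_t^\alpha h(t)\ge \alpha J_{\nu,\alpha}$.

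For $-1<t<1$, $h(t)=0$ and only $s\le -1$ contributes, so $\partial_t^\alpha h(t)=-\alpha\int_1^\infty(u^\nu-1)(t+u)^{-1-\alpha}\,du$. This is non-positive and monotone increasing in $t$, so its minimum over $(-1,1)$ is attained in the limit $t\to-1^+$, giving $-\alpha\int_1^\infty(u^\nu-1)(u-1)^{-1-\alpha}\,du$, which is again finite by the same two tail bounds. For $t\ge 1$, split the $s$-integration into $(-\infty,-1)\cup[-1,1]\cup(1,t)$: the latter two pieces have non-negative integrand (since $h(t)\ge h(s)$ there), and the first piece is bounded below using $t+u\ge u$ by $-\alpha\int_1^\infty u^\nu(t+u)^{-1-\alpha}\,du\ge -\alpha\int_1^\infty u^{\nu-1-\alpha}\,du = -\alpha/(\alpha-\nu)$.

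The only mildly delicate point is that $h$ is unbounded at infinity, so each of the three estimates requires $\nu<\alpha$ to produce a convergent tail; without this hypothesis $\partial_t^\alpha h$ would not be bounded below. The three separate bounds combine to give a single constant $c_{\nu,\alpha}$ depending only on $\nu$ and $\alpha$.
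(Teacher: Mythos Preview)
Your proof is correct. One small point: in the paper's setup $\partial_t^\alpha$ carries an implicit initial point $a$ (see Section~\ref{s:caputo}), so strictly speaking one should first observe that $_a\partial_t^\alpha h(t) \geq {}_{-\infty}\partial_t^\alpha h(t)$; this is immediate since $h$ is nonincreasing on $(-\infty,0)$, and the paper records exactly this inequality in its first displayed line. After that reduction the two arguments diverge in execution.

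The paper works with the original Caputo formula $\int_{-\infty}^t h'(s)(t-s)^{-\alpha}\,ds$ and uses a monotonicity argument: the quantity $\int_{-\infty}^{t} \nu|s|^{\nu-1}(t-s)^{-\alpha}\,ds$ is increasing in $t$ for $t<0$, so the worst case reduces to the single explicit integral at $t=-1$. You instead use the integrated form \eqref{e:fracalt}, split into three $t$-ranges, and in the region $t\le -1$ extract the factor $|t|^{\nu-\alpha}\le 1$ via the scaling $s\mapsto |t|r$. Your route is more explicit (it even produces the concrete bound $\alpha/(\alpha-\nu)$ in the $t\ge 1$ case) and makes the role of the hypothesis $\nu<\alpha$ visible at each step, while the paper's monotonicity trick is shorter and collapses everything to one evaluation.
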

 
 \begin{proof}
  By definition
   \[
    \partial_t^\alpha h =  \int_{a}^{t}\frac{-\nu|s|^{\nu-1}}{(t-s)^{\alpha}} \ ds
     \geq  \int_{- \infty}^{t}\frac{-\nu|s|^{\nu-1}}{(t-s)^{\alpha}} \ ds.
   \]
  Since $|s|^{\nu - 1}$ and $(t-s)^{-\alpha}$ are both increasing functions of $s$ for $s<0$ it follows that
   \[
     \int_{- \infty}^{t}\frac{\nu|s|^{\nu-1}}{(t-s)^{\alpha}} \ ds
   \]
  is an increasing function of $t$. If $t\leq -1$, then
   \[
     \partial_t^\alpha h
      \geq  \int_{- \infty}^{t}\frac{-\nu|s|^{\nu-1}}{(t-s)^{\alpha}} \ ds 
      \geq  \int_{- \infty}^{-1}\frac{-\nu|s|^{\nu-1}}{(-1-s)^{\alpha}} \ ds 
      \geq -c_{\alpha,\nu}.
   \]
  If $t>-1$, then
   \[
     \partial_t^\alpha h(t) \geq 
        \int_{-\infty}^{-1}\frac{-\nu|s|^{\nu-1}}{(t-s)^{\alpha}} \ ds 
      \geq  \int_{- \infty}^{-1}\frac{-\nu|s|^{\nu-1}}{(-1-s)^{\alpha}} \ ds 
      \geq -c_{\alpha,\nu}.
   \]  
 \end{proof}

\section{Discretization in time}  \label{s:discretize}
 To prove existence of solutions to \eqref{e:w} we will discretize in time. This discretization will also
 be useful when proving the H\"older continuity. This section contains properties of a discrete fractional
 derivative which we will utilize.
 
 To find a solution we subdivide the interval $(a,T)$ into $k$ intervals and let $\e = T/k$.
 For each fixed $k$ we may solve via recursion  
  \begin{equation}  \label{e:firstd}
   \begin{aligned}
   &\alpha \e
    \sum_{i <j} \frac{w(a+\e j,x) - w(a+\e i,x)}{(\e (j-i))^{1+\alpha}} \\
     &\quad = \int_{\R^n}[w(a+\e j,y)-w(a+\e j,x)]K(a+\e j,x,y)dy +  f(a+\e j,x),
   \end{aligned}
  \end{equation}
 for each $-\infty<j\leq k$. Here $f(t,x)=f(a,x)$ for $t<a$, so that $w(a+\e j,x)=w(a,x)$ for $j<0$. 
 
 For future reference we denote the discrete Caputo derivative as
 \begin{equation}  \label{e:dc}
  \partial_{\e}^{\alpha} u(a+\e j):=   \epsilon \alpha 
    \sum_{-\infty<i <j} \frac{u(a+\e j) - u(a+\e i)}{(\e (j-i))^{1+\alpha}}
 \end{equation}
 
 The following integration by parts type estimate will be useful 
 \begin{lemma} \label{l:decompose}
  \begin{equation}  \label{e:decompose}
   \begin{aligned}
    \alpha \sum_{0<j\leq k} \e u(a+\e j) \partial_{\e}^{\alpha} u(a+\e j)&\geq 
       \e^{1-\alpha} \mathop{\sum \sum}_{0\leq i < j \leq k} \frac{u^2(a+\e j) - u^2(a+\e i)}{(j-i)^{1+\alpha}} \\ 
     & \ + 
       \frac{\e^{1-\alpha}}{2} \sum_{0<j< k} \frac{u^2(a+\e j)}{ 2^{1+\alpha}(k-j)^{\alpha}} \\ 
     & \ + \frac{\e^{1-\alpha}}{2} \sum_{0<j\leq k} \frac{u^2(a+\e j)}{2 j^{\alpha}}  \\ 
      & \ - \e^{1-\alpha} \sum_{0<j\leq k}\frac{u(a)u(a+\e j)}{j^{\alpha}}.
    \end{aligned}
   \end{equation}    
 \end{lemma}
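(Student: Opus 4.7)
My plan is to treat the inequality as the discrete analogue of Proposition~\ref{p:changev} applied with $g=h=u$, weakened by discarding the nonnegative Gagliardo-type double sum. Writing $v_j:=u(a+\e j)$, I would first unpack the left-hand side via the definition \eqref{e:dc} to obtain
\[
 \alpha^{2}\,\e^{1-\alpha}\sum_{0<j\leq k}\sum_{i<j}\frac{v_j(v_j-v_i)}{(j-i)^{1+\alpha}},
\]
and then invoke the pointwise algebraic identity $2v_j(v_j-v_i)=(v_j^{2}-v_i^{2})+(v_j-v_i)^{2}$, discarding the nonnegative square. What remains is a sum of $v_j^{2}-v_i^{2}$ terms, which is the natural lower-bound counterpart of the exact identity in Proposition~\ref{p:changev}.

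Next I would split the inner sum at $i=0$. Because the recursion \eqref{e:firstd} extends $u$ by $v_i=v_0=u(a)$ for $i<0$, the boundary portion collapses to $\sum_{j=1}^{k}v_j(v_j-v_0)\,A_j$ with $A_j:=\sum_{m\geq j+1}m^{-1-\alpha}$, while the interior portion $0\leq i<j\leq k$ reproduces the first term on the right-hand side of the lemma. Elementary integral comparison gives the sandwich $(j+1)^{-\alpha}\leq\alpha A_j\leq j^{-\alpha}$, and analogously $\alpha\sum_{m\geq k-j+1}m^{-1-\alpha}\asymp(k-j)^{-\alpha}$. Distributing the product $v_j(v_j-v_0)=v_j^{2}-v_0 v_j$ then produces both the left-endpoint energy term $\frac{v_j^{2}}{2j^{\alpha}}$ and the mixed initial-data interaction $-\frac{v_0 v_j}{j^{\alpha}}$; the right-endpoint term $\frac{v_j^{2}}{2^{1+\alpha}(k-j)^{\alpha}}$ emerges from a discrete Fubini on the interior double sum, which rewrites the coefficient of $v_j^{2}$ in the asymmetric form $A_{k-j}-A_j$.

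The conceptual content is the single algebraic identity together with one pair of integral-comparison estimates; the hard part is the sign bookkeeping. In particular, the mixed contribution $-v_0 v_j A_j$ can have either sign, so one has to choose the upper or the lower bound on $\alpha A_j$ depending on the sign, and absorb the leftover into the $v_j^{2}$ coefficient using the complementary bound. The specific constants $2^{1+\alpha}$ and $\frac{1}{2}$ in the statement are dictated by this balancing, and since the lemma will only be used as a one-sided energy estimate on the discretized solutions in later sections, sharpness of the constants is unimportant, so the mild loss of symmetry is harmless.
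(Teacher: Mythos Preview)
Your opening moves are right and match the paper: expand via \eqref{e:dc}, apply the identity $2v_j(v_j-v_i)=(v_j-v_i)^2+(v_j^2-v_i^2)$, split the inner sum at $i=0$, and compare tail sums to integrals. But there is a genuine gap in the bookkeeping. You claim that the interior portion $0\le i<j\le k$ of the $(v_j^2-v_i^2)$ sum \emph{both} reproduces the first term on the right-hand side \emph{and}, via Fubini, supplies the right-endpoint term $v_j^2/(k-j)^\alpha$. That is double counting: once you have committed the interior double sum as the first right-hand term, nothing is left to Fubini. Conversely, if you Fubini the interior (your observation that the coefficient of $v_j^2$ is $A_{k-j}-A_j$ is correct) and combine with the boundary piece $v_j^2 A_j$, you do obtain $\tfrac12 v_j^2(A_{k-j}+A_j)$ and hence both endpoint terms---but then the first right-hand term is gone.

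The resolution, and this is what the paper does, is \emph{not} to discard the square. The $(v_j-v_i)^2$ sum over $0\le i<j\le k$ is kept and becomes the first term on the right (the printed $u^2(a+\e j)-u^2(a+\e i)$ is almost certainly a typo for $[u(a+\e j)-u(a+\e i)]^2$; this is how the lemma is used downstream to control the $H^{\alpha/2}$ seminorm in \eqref{e:step11}). The remaining $(v_j^2-v_i^2)$ half, together with the $i<0$ boundary contribution $v_j(v_j-v_0)\sum_{i<0}(j-i)^{-1-\alpha}$, is then rearranged to produce the two endpoint terms and the mixed $-v_0 v_j/j^\alpha$ term. The paper carries this out with the reflection identity
\[
\sum_{2j-k\le i<j}\frac{1}{(j-i)^{1+\alpha}}=\sum_{j<i\le k}\frac{1}{(i-j)^{1+\alpha}},
\]
which is exactly your Fubini observation in disguise: it converts the negative part of the interior coefficient into a tail beyond $k-j$, so that after adding the boundary tail one is left with $\sum_{i<2j-k}(j-i)^{-1-\alpha}\gtrsim(k-j)^{-\alpha}$ via the integral comparison \eqref{e:lusual}. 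So your Fubini route and the paper's reflection are equivalent; the only error is that you tried to spend the interior sum twice. Keep the square, and your outline goes through essentially as the paper's proof.

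A smaller point: after applying the identity globally, the boundary portion is $(v_j^2-v_0^2)/2\cdot A_j$, not $v_j(v_j-v_0)A_j$ as you wrote; to get the latter you must refrain from using the identity on the $i<0$ range (which is fine, since there $(v_j-v_i)^2=(v_j-v_0)^2$ is constant in $i$ anyway). Your final paragraph about choosing upper versus lower bounds on $\alpha A_j$ according to the sign of $v_0 v_j$ is unnecessary once the square is retained: the paper simply applies the one-sided bound \eqref{e:lusual} uniformly.
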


 \begin{proof}
  For notationally simplicity we assume throughout this proof that $a=0$. 
  For $i>0$ we write
   \[
    u(\e j)[u(\e j)- u(\e i)] = [u(\e j)- u(\e i)]^2 /2 + [u^2(\e j) - u^2(\e i)]/2.
   \]
  We note that 
   \[
     \sum_{2j-k \leq i < j} \frac{u^2(j)}{(j-i)^{1+\alpha}} = \sum_{j < i \leq k} \frac{u^2(j)}{(i-j)^{1+\alpha}},
   \]
  We thus conclude
   \begin{equation}  \label{e:break}
    \begin{aligned}
    \alpha  \sum_{0<j\leq k} \e u(\e j) \partial_{\e}^{\alpha} u(\e j)
     &= \alpha \e^{1-\alpha}\mathop{\sum \sum}_{0\leq i < j \leq k} \frac{[u(\e j) - u(\e i)]^2}{2(j-i)^{1+\alpha}} \\
     & \quad + \e^{1-\alpha}\sum_{0<j\leq k}  \sum_{-\infty<i<2j-k} \frac{u^2(\e j)}{2(j-i)^{1+\alpha}} \\
     & \quad + \e^{1-\alpha}\sum_{0<j\leq k}  \sum_{-\infty<i<j} \frac{u^2(\e j)/2-u(\e j)u(0)}{(j-i)^{1+\alpha}}.
     \end{aligned}
   \end{equation}
  We also have the following bound for $l<j$
   \begin{equation}  \label{e:lusual}
    \sum_{-\infty<i<l} (j-i)^{-(1+\alpha)} \geq 2^{-(1+\alpha)}\int_{-\infty}^{l} (j-t)^{-(1+\alpha)}
     \geq 2^{-(1+\alpha)} (j-l)^{1+\alpha}.
   \end{equation}
  Applying \eqref{e:lusual} to the appropriate terms in \eqref{e:break}, 
  and ignoring the  appropriate positive term when $j=k$ we obtain \eqref{e:decompose}.
 \end{proof}

 

 This next lemma is analogous to Lemma \ref{l:fracbound} for the discrete Caputo derivative. 
  \begin{lemma}  \label{l:dfracbound}
   Let $h$ be as in Lemma \ref{l:fracbound}. Then for $0<\epsilon < 1$ there exists $c_{\nu,\alpha}$
   depending on $\alpha$ and $\nu$ but 
   independent of $a$ such that 
    \[
     \partial_{\epsilon}^{\alpha} h(t) \geq -c_{\nu,\alpha}
    \] 
   for $t\in \epsilon \Z$ and $a<t<0$.
  \end{lemma}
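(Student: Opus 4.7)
The plan is to mimic the continuous argument of Lemma~\ref{l:fracbound}, replacing the integrals by the discrete sums in~\eqref{e:dc} and carefully splitting in $m$ according to the scale of $\epsilon m$ relative to $|t|$ and to~$1$.

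First I will reduce to the case where the sum uses the true values of $h$ all the way down to $-\infty$. Since $h$ is nondecreasing in $|s|$ for $s<0$ and $|s|\geq |a|$ whenever $s\leq a<0$, one has $h(a)\leq h(s)$ there, so the constant extension $h(s):=h(a)$ for $s<a$ underestimates $h$; substituting $h(a)$ for $h(s)$ in the definition of $\partial_\epsilon^\alpha h$ only enlarges the differences $h(t)-h(\cdot)$ appearing in the sum. Hence it suffices to bound from below
\[
I_\epsilon(t)\;:=\;\alpha\epsilon\sum_{m=1}^\infty\frac{h(t)-h(t-\epsilon m)}{(\epsilon m)^{1+\alpha}}
\]
uniformly in $\epsilon\in(0,1)$ and $t\in\epsilon\Z$ with $t<0$; the resulting constant will automatically be independent of~$a$.

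Since $h$ is nondecreasing in $|s|$, $I_\epsilon(t)\leq 0$, and it remains to bound $|I_\epsilon(t)|$ from above. Two elementary estimates on the increments of $h$ will be used. For $t\leq -1$, the mean value theorem together with the monotonicity of $x\mapsto x^{\nu-1}$ (since $\nu<1$) yields $(|t|+\epsilon m)^\nu-|t|^\nu\leq \nu|t|^{\nu-1}\epsilon m$, and subadditivity of $x\mapsto x^\nu$ yields $(|t|+\epsilon m)^\nu-|t|^\nu\leq (\epsilon m)^\nu$. For $-1<t<0$, $h(t)=0$ and only $m$ with $\epsilon m>1+t$ contribute; the inequality $x^\nu-1\leq \nu(x-1)$ for $x\geq 1$, applied at $x=|t|+\epsilon m$, gives $h(t-\epsilon m)\leq \nu\epsilon m$, while subadditivity gives $h(t-\epsilon m)\leq 2^\nu(\epsilon m)^\nu$ when $\epsilon m\geq 1$. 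I then split the sum at the crossover $M:=\lfloor|t|/\epsilon\rfloor$ (for $t\leq -1$) or $M:=\lfloor 1/\epsilon\rfloor$ (for $-1<t<0$), using the linear bound on $\{m\leq M\}$ and the power bound on $\{m>M\}$. The elementary estimates
\[
\sum_{m\leq M}m^{-\alpha}\leq \frac{M^{1-\alpha}}{1-\alpha}+1,\qquad \sum_{m>M}m^{\nu-1-\alpha}\leq \frac{M^{\nu-\alpha}}{\alpha-\nu}
\]
make the powers of $\epsilon$ and $M$ cancel exactly, yielding a bound by a constant depending only on $\nu$ and $\alpha$ (the residual $|t|^{\nu-\alpha}$ in the first case is $\leq 1$ since $|t|\geq 1$ and $\nu<\alpha$).

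The main difficulty is the uniformity in $\epsilon$: a single one of the two bounds on $|h(t)-h(t-\epsilon m)|$ applied across all $m$ gives a contribution diverging like $\epsilon^{-\alpha}$ or $\epsilon^{\nu-\alpha}$ as $\epsilon\to 0$. Only by combining the linear and power bounds at the correct crossover do the $\epsilon$-divergences cancel, producing the uniform constant $c_{\nu,\alpha}$.
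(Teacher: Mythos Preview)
Your argument is correct and complete in all essential points. The reduction from $_{a}\partial_\epsilon^\alpha$ to the full sum $I_\epsilon$ over $m\ge 1$ matches the paper's first step exactly. After that the two proofs diverge.

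The paper argues by monotonicity: concavity of $|s|^\nu$ implies that the map $t\mapsto {}_{-\infty}\partial_\epsilon^\alpha h(t)$ is nonincreasing on $(-\infty,-1]\cap\epsilon\Z$, and for $t\in(-1,0)$ one compares termwise with the sum at $t=-1$; in both regimes the worst case is (the grid point nearest) $t=-1$, and the paper then simply asserts that $\partial_\epsilon^\alpha h(-1)$ is uniformly bounded for $0<\epsilon<1$, this being a Riemann sum for $-\alpha\int_0^\infty s^{-1-\alpha}\bigl[(1+s)^\nu-1\bigr]\,ds$. Your approach instead estimates $|I_\epsilon(t)|$ directly for every $t$, by splitting the sum at $M\approx |t|/\epsilon$ (resp.\ $1/\epsilon$) and using the mean-value bound on the short range and subadditivity on the long range. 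The paper's route is shorter and conceptually cleaner, but it leaves the last uniformity claim implicit; your route is more explicit and yields a computable constant without an appeal to convergence of Riemann sums.

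Two small points worth tightening. First, in the case $t\le -1$ you use $\epsilon M=|t|$; this is exact because $t\in\epsilon\Z$ forces $|t|/\epsilon\in\Z$, but you should say so. Second, in the case $-1<t<0$ the tail bound produces a factor $(\epsilon M)^{\nu-\alpha}$ with $\nu-\alpha<0$, so you need $\epsilon M$ bounded \emph{below}; since $M=\lfloor 1/\epsilon\rfloor$, one checks that $\epsilon M>1/2$ for every $\epsilon\in(0,1)$ (if $\epsilon\in(1/(k+1),1/k]$ then $M=k$ and $\epsilon M>k/(k+1)\ge 1/2$), which gives $(\epsilon M)^{\nu-\alpha}\le 2^{\alpha-\nu}$. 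With these two remarks your bounds close as claimed.
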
 
  
  \begin{proof}
   Since for $(\epsilon j)<-1$, $h(t)$ is a decreasing function we have 
    \[
     _{a}\partial_{\e}^{\alpha} h(t) \geq _{- \infty}\partial_{\e}^{\alpha} h(t).
    \]
   Recall that  
    \[
     _{- \infty}\partial_{\epsilon}^{\alpha} h(\epsilon j) 
      = \epsilon^{-\alpha} \alpha \sum_{-\infty <i < j}\frac{h(\epsilon j)-h(\epsilon i)}{(j-i)^{1+\alpha}}.
    \]
   Since $\nu<1$, if $t_1<t_2\leq 1$, then
    \[
     h(t_1)- h(t_1-h) \geq h(t_2) - h(t_2 - h). 
    \]
   Then $\partial_{\e}^{\alpha} h(\e j_2) \leq \partial_{\e}^{\alpha} h(\e j_1)$ if $\e j_1 < \e j_2\leq -1$. 
   As in Lemma \ref{l:fracbound} we also have 
   $\partial_{\e}^{\alpha} h(\e j_2) \geq \partial_{\e}^{\alpha} h(-1)$ and 
   $\partial_{\e}^{\alpha} h(-1)$ is uniformly bounded below for $0<\e<1$. 
   Then for $\epsilon j <0$ we conclude the lemma. 
  \end{proof}
  
 This next Lemma gives a fractional Sobolev bound for an extension of discrete functions. Throughout this 
 paper whenever we have a function $u$ defined on $\epsilon \Z$ we extend $u$ to all of $\R$ by
  \[
   u(t)= u(\epsilon j) \quad \text{  for  } j-1<t\leq j.
  \]
 This extension works particularly well for the Caputo derivative. 
 
%

 \begin{lemma}  \label{l:extend}
  If $u_{\e}$ is the appropriate extension of $u$, then there exists $c$ depending on $\alpha$, but independent of 
  $a$ such that if $a<-1$, then with $a+\epsilon k=T$
   \[
    \alpha \int_a^T \int_{-\infty}^t \frac{(u(t)-u(s))^2}{(t-s)^{1+\alpha}} \ ds \ dt 
      \leq c \epsilon^{2} \alpha \mathop{\sum \sum}_{ i<j\leq k} 
          \frac{(u(a+\e j)-u(a+\e i))^2}{(\e(j-i))^{1+\alpha}}    
   \]
  and  
   \[
    \int_a^T \frac{u(t)}{(t-a)^{\alpha}} \leq c \e \sum_{0<j\leq k} \frac{u^2(a+\e j)}{(a+\e j)^{\alpha}}.
   \]
 \end{lemma}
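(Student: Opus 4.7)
My plan is to reduce both inequalities to direct computations exploiting the piecewise constant nature of $u_\epsilon$. Set $I_j := (a+\epsilon(j-1),\,a+\epsilon j]$ so that $u_\epsilon \equiv u(a+\epsilon j)$ on $I_j$, with the convention $u(a+\epsilon j)=u(a)$ for $j\le 0$ (so the extension is constant on $(-\infty,a]$ and agrees with the indexing of the discrete sum on the right-hand side).

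For the first inequality I would decompose
\[
\int_a^T\!\int_{-\infty}^t \frac{(u_\epsilon(t)-u_\epsilon(s))^2}{(t-s)^{1+\alpha}}\,ds\,dt
 = \sum_{1\le j\le k}\sum_{i<j}(u(a+\epsilon j)-u(a+\epsilon i))^2\,\beta_{ji},
\]
where $\beta_{ji}=\iint_{I_j\times I_i}(t-s)^{-1-\alpha}\,ds\,dt$ (terms with $i=j$ vanish because $u_\epsilon$ is constant on $I_j$; the constraint $s<t$ is automatic since $i<j$ puts $I_i$ entirely below $I_j$). The key step is to show $\beta_{ji}\le c_\alpha\,\epsilon^2/(\epsilon(j-i))^{1+\alpha}$. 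When $j-i\ge 2$, one has $t-s\ge \epsilon(j-i-1)\ge \tfrac12\epsilon(j-i)$, so the bound follows with $c=2^{1+\alpha}$. The delicate case is $j-i=1$, where $(t-s)^{-1-\alpha}$ is integrably singular on the square; changing variables $u=t-a-\epsilon(j-1)$, $v=a+\epsilon(j-1)-s$ reduces it to $\int_0^\epsilon\!\int_0^\epsilon(u+v)^{-1-\alpha}\,du\,dv = \frac{2-2^{1-\alpha}}{\alpha(1-\alpha)}\,\epsilon^{1-\alpha}$, which matches the scaling $\epsilon^2/\epsilon^{1+\alpha}$ up to an $\alpha$-dependent constant. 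Summing in $i,j$ gives the first inequality; the pieces of the LHS with $s<a$ are automatically accounted for by the $i\le 0$ entries of the sum, where the convention $u(a+\epsilon i)=u(a)$ makes the numerators agree.

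For the second inequality I integrate directly on each $I_j$, obtaining
\[
\int_{I_j}\frac{u_\epsilon^2(t)}{(t-a)^\alpha}\,dt=\frac{u^2(a+\epsilon j)}{1-\alpha}\bigl[(\epsilon j)^{1-\alpha}-(\epsilon(j-1))^{1-\alpha}\bigr].
\]
For $j=1$ this equals $\frac{\epsilon^{1-\alpha}}{1-\alpha}u^2(a+\epsilon)$, matching $\epsilon\,u^2(a+\epsilon)/(\epsilon\cdot 1)^\alpha$ up to $\frac{1}{1-\alpha}$. For $j\ge 2$, the mean value theorem applied to $x\mapsto x^{1-\alpha}$ on $[\epsilon(j-1),\epsilon j]$ bounds the bracket by $(1-\alpha)\epsilon(\epsilon(j-1))^{-\alpha}\le 2^\alpha(1-\alpha)\epsilon(\epsilon j)^{-\alpha}$, giving $\int_{I_j}\le c\,\epsilon\,u^2(a+\epsilon j)/(\epsilon j)^\alpha$. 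Summing in $j$ yields the claim, interpreting the denominator on the right as $(\epsilon j)^\alpha$ (the elapsed time from $a$), which is what makes the bound independent of $a$.

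The main technical obstacle is the adjacent-interval case $j-i=1$ in the first inequality, since that is the only point where the singularity of $(t-s)^{-1-\alpha}$ is encountered on a set of positive measure; the explicit computation above shows that it contributes exactly the correct scale. Everything else is elementary monotonicity and scaling, and because the whole argument is translation invariant in $t$ (all quantities depend only on the differences $t-s$ and $t-a$), the constants depend only on $\alpha$, as asserted.
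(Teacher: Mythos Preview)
Your proposal is correct and follows essentially the same approach as the paper: both proofs decompose the double integral over the rectangles $I_j\times I_i$, bound the kernel integral $\beta_{ji}$ by $c_\alpha\,\epsilon^2/(\epsilon(j-i))^{1+\alpha}$ (the paper records precisely the constant $\max\{2^{1+\alpha},(2-2^{1-\alpha})/(1-\alpha)\}$ that your two cases $j-i\ge 2$ and $j-i=1$ produce), and for the second inequality both integrate $(t-a)^{-\alpha}$ over $I_j$ and bound the result by $c/(1-\alpha)\cdot(\epsilon j)^{-\alpha}$. Your use of the mean value theorem for the latter is a minor variant of the paper's algebraic factoring $j^{1-\alpha}-(j-1)^{1-\alpha}=j^{-\alpha}\bigl[j-(j-1)(j/(j-1))^\alpha\bigr]\le j^{-\alpha}$, but the content is identical.
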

  
 \begin{proof}
  We note that for $i<j$
   \[
    \int_{\e(j-1)}^{\e j} \int_{\e(i-1)}^{\e i} \frac{1}{(t-s)^{1+\alpha}} \ ds \ dt
     \leq \e^2 \max \left\{ 2^{1+\alpha} , \frac{2-2^{1+\alpha}}{1-\alpha} \right\}
      \frac{1}{(\e(j-i))^{1+\alpha}}. 
   \] 
  The first conclusion then follows. 
  
  We now claim
   \begin{equation}  \label{e:alphab}
    \frac{1}{j^{\alpha}} \leq \int_{j-1}^j t^{-\alpha} \ dt  \leq \frac{1}{(1-\alpha)j^{\alpha}}.
   \end{equation}
  The first inequality is trivial. To prove the second inequality we compute
   \[
    \begin{aligned}
    \int_{j-1}^j t^{-\alpha} &= \frac{1}{1-\alpha} \left[j^{1-\alpha} -(j-1)^{1-\alpha}\right] \\
                             &= \frac{j^{-\alpha}}{1-\alpha} \left[j-(j-1)\left(\frac{j}{j-1}\right)^{\alpha} \right] \\
                             &\leq \frac{j^{-\alpha}}{1-\alpha}.
    \end{aligned}            
   \]
  This inequality implies the second conclusion. 
 \end{proof}


\section{First De Giorgi Lemma}  \label{s:lemma1}
 In this section we prove De Giorgi's first Lemma commonly known as the ``$L^2, L^{\infty}$'' estimate. 
 For a solution $u$ to the local equation this was proved using the cut-off $(u-M)_+$ for a constant $M$. 
 We will use the cut-off function $(w-\psi)_+$ where $\psi$ is defined as follows:
  \[
    \psi(x,t) := (|x|^{\sigma/2} -1)_+ + (|t|^{\alpha /2} -1)_+.
  \]
 We note that we will only utilize $\psi$ when $t\leq 0$. 
 We recall that $\sigma$ refers to the fractional order of the kernel $K$ with bounds depending on $\Lambda$.
 For any $L \geq 0$, we define
  \[
   \psi_{L}(x,t) = L + \psi(x,t).
  \]


 In the next two sections the proofs we present of the first and second De Giorgi Lemmas would 
 apply to a solution of \eqref{e:w} if we knew the cut-offs of $w$ were valid test functions. 
 Since this is not known a priori we prove the Lemmas for the sequence of approximating functions 
 $w_{\e}$ and obtain the results of the Lemmas for the solution $w$. 
 
 In the next two sections we will abuse notation for convenience and also to make the proofs more 
 transparent. We will write $w$ to mean a solution of \eqref{e:firstd} and assume that $\e$ is understood.
 We also extend $w$ by $w(t)=w(a+\e j)$ and $\psi(t)=\psi(a+\e j)$ for $a+\e(j-1)<t \leq a+\e j$. 
 
 We also recall that $Q:=(a,T) \times \R^n$. 
 \begin{lemma}  \label{l:degiorgi1}
  There exists a constant $\kappa_0 \in (0,1)$ depending only on $n,\sigma,\Lambda,\alpha$ 
  - but independent of $\epsilon$ and $a$- such that for any solution
  $w:[a,0] \times \R^n \hookrightarrow \R$ to \eqref{e:firstd}
  with $\|f\|_{L^{\infty}(Q)}\leq 1$ and $a\leq -1$, the following implication for $w$ holds true. 
  
  If it is verified that 
   \[
    \int_a^T \int_{\R^n} [w(t,x) - \psi]_+^2 \ dx \ dt \leq \kappa_0 , 
   \]
  and
   \[
    w(a,x) \leq \psi(a,x)+1/2 \text{   for all   } x \in \R^n ,
   \]
  then we have
   \[
    w(t,x) \leq \frac{1}{2} + \psi(x,t)
   \]
  for $(t,x) \in [a,T] \times \R^n$. Hence, we have in particular that $w \leq 1/2$ on $[-1,0] \times B_1(0)$.
 \end{lemma}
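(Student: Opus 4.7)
The plan is to run a De Giorgi iteration along the level sets of $w$ relative to the barrier $\psi$. For $k \geq 0$, set $L_k := 1/2 - 2^{-k-1}$, $\psi_k := L_k + \psi$, $w_k := (w - \psi_k)_+$, and
\[
  U_k := \int_a^T \int_{\R^n} w_k^2.
\]
By hypothesis $U_0 \leq \kappa_0$ and $w_k(a,\cdot) \leq 1/2 - L_k = 2^{-k-1}$. It suffices to show $U_k \to 0$, which forces $w \leq 1/2 + \psi$ in $Q$.

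The key step is an energy inequality for $w_k$. Testing \eqref{e:firstd} against $\phi = w_k$ (a legitimate test function in the discretized setting), the right-hand side is dominated by $\int w_k$ since $\|f\|_{L^\infty} \leq 1$. For the time term, split $\partial_\e^\alpha w = \partial_\e^\alpha(w - \psi_k) + \partial_\e^\alpha \psi$; by Lemma \ref{l:dfracbound}, $\partial_\e^\alpha \psi \geq -c$, so the barrier contribution gives at worst $-c \int w_k$. Applying the termwise convex inequality
\[
  s_+(s - t) \geq \tfrac{1}{2}(s_+^2 - t_+^2) + \tfrac{1}{2}(s_+ - t_+)^2
\]
inside the discrete Caputo sum and combining with Lemma \ref{l:decompose} yields a coercive discrete $H^{\alpha/2}_t$-seminorm of $w_k$ plus boundary terms, the one at $t = a$ dominated by $w_k(a)^2 \leq 4^{-k-1}$. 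For the spatial term, the decomposition $w(x) - w(y) = [w_k(x) - w_k(y)] + [\psi_k(x) - \psi_k(y)] - [(w-\psi_k)_-(x) - (w-\psi_k)_-(y)]$ combined with the ellipticity \eqref{e:elliptic} and Young's inequality produces the coercive $H^{\sigma/2}_x$-seminorm of $w_k$; the error from $\psi$ is manageable because $K$'s lower bound is truncated to $|x-y|\leq 3$ (on which $\psi$ is Lipschitz), while its upper bound decays fast enough to swallow $\psi$'s growth at infinity.

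Combining the two estimates and using $\int w_k \leq U_k^{1/2} |\{w_k>0\}|^{1/2}$ yields
\[
  \|w_k\|_{L^2_t H^{\sigma/2}_x}^2 + \|w_k\|_{H^{\alpha/2}_t L^2_x}^2 \leq C\bigl(U_k + |\{w_k>0\}| + 4^{-k}\bigr).
\]
Lemma \ref{l:extend} lets one pass to continuous fractional seminorms, after which a mixed fractional parabolic Sobolev embedding gives $\|w_k\|_{L^p(Q)}^2 \leq C(U_k + |\{w_k>0\}| + 4^{-k})$ for some $p > 2$ depending only on $(n,\sigma,\alpha)$. Since $\{w_{k+1}>0\} \subset \{w_k > 2^{-k-2}\}$, Chebyshev gives $|\{w_{k+1}>0\}| \leq 4^{k+2} U_k$, and H\"older then produces
\[
  U_{k+1} \leq \|w_k\|_{L^p}^2 \, |\{w_{k+1}>0\}|^{1-2/p} \leq C^k (U_k + 4^{-k})^{1+\beta}
\]
for some $\beta > 0$. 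The standard nonlinear iteration lemma shows that if $\kappa_0$ is chosen small enough (depending only on $n,\sigma,\Lambda,\alpha$), then $U_k \to 0$, forcing $w \leq 1/2 + \psi$ a.e.\ on $Q$, hence everywhere by continuity, and in particular $w \leq 1/2$ on $[-1,0] \times B_1(0)$ where $\psi \equiv 0$.

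The main obstacle is carrying out the energy inequality with all error terms under uniform control (independent of $\epsilon$ and of $a \leq -1$). The barrier $\psi$ couples space and time nontrivially: the initial-value cross term from Lemma \ref{l:decompose} requires the sharp decay $w_k(a,\cdot) \leq 2^{-k-1}$ to be absorbed as $O(4^{-k})$, and the spatial error from $\psi(x) \sim |x|^{\sigma/2}$ must be absorbed using the precise interplay between the exponent $\sigma/2$ and the decay rate of $K$. Once the energy inequality has the displayed form, the closing Sobolev--Chebyshev iteration is routine.
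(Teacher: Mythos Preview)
Your approach is the same De Giorgi iteration as the paper's, but two points deserve comment.

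First, the paper packages the iterate as
\[
  U_k := \int_a^T\!\!\int_{\R^n}\Bigl[(w-\psi_{L_k})_+ + (w-\psi_{L_k})_+^2 + \chi_{\{w>\psi_{L_k}\}}\Bigr],
\]
which turns the energy--Sobolev--Chebyshev chain into the clean one-step recurrence $U_k \le \bar C^{\,k} U_{k-1}^{p/2}$. Your choice $U_k=\int w_k^2$ forces $|\{w_k>0\}|$ to be bounded via $U_{k-1}$, so what your argument actually produces is a two-step recurrence rather than the displayed $U_{k+1}\le C^k(U_k+4^{-k})^{1+\beta}$. This is repairable but less tidy; the paper's three-term $U_k$ is the cleaner bookkeeping.

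Second, and more seriously, your treatment of the initial-data contribution has a gap. The boundary term that Lemma~\ref{l:decompose} leaves behind is the cross term
\[
  \int_{\R^n}\e^{1-\alpha}\sum_{0<j\le k}\frac{w_k(a,x)\,w_k(a+\e j,x)}{j^{\alpha}}\,dx,
\]
not simply $w_k(a)^2$. The pointwise bound $w_k(a,x)\le 2^{-k-1}$ does \emph{not} reduce this to an $O(4^{-k})$ constant: after Young's inequality one absorbs half into the good weighted-$L^2$ term, but the residual is
\[
  C\,(T-a)^{1-\alpha}\int_{\R^n} w_k(a,x)^2\,dx,
\]
which is neither finite a priori (the integral is over all of $\R^n$ with no support control on $w_k(a,\cdot)$) nor uniform in $a\le -1$. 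The paper sidesteps this entirely by invoking the initial-data hypothesis to assert $(w-\psi_L)_+(a,\cdot)\equiv 0$, so the cross term simply drops out and no $4^{-k}$ remainder enters the iteration. You should argue for the vanishing of this term rather than attempt to carry it.
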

 
 \begin{proof}
  If $w$ is a solution to \eqref{e:firstd} we take  $\e [w-\psi_L]_+$ as a test function. We will only consider
  $L\leq 1/2$, so that the assumption on the initial condition will apply.  
  We add in $j$ and integrate
  over $\R^n$ to obtain
   \begin{equation}  \label{e:energyineq}
   \begin{aligned}
    &\int_{\R^n} \sum_{0<j\leq k}\e (w-\psi_L)_+(a+\e j,x) 
     \partial_{\e}^\alpha w(a+\e j, x) + \sum_{0<j\leq k} \e B[w,(w-\psi_L)_+](a+\e j,x) \\
     &= \int_{\R^n} \sum_{0<j\leq k}\e (w-\psi_L)_+(a+\e j,x)f(a+\e j,x)
   \end{aligned}
   \end{equation}
  We write $v=(w-\psi_L)_+$. We also define $v$ and $f$ for non-integer values as we did for $w$ and $\psi$. 
  Then
   \[
    \sum_{0<j\leq k} \epsilon B[w,v] = \int_a^T B[w,v], 
   \]
  and 
   \[
    \int_{\R^n} \sum_{0<j\leq k}\e (w-\psi_L)_+(a+\e j,x)f(a+\e j,x) = \int_{\R^n}\int_a^T fv.
   \]
  The elliptic portion is controlled exactly as in \cite{ccv11}, so that
   \begin{equation}  \label{e:elpart}
    \begin{aligned}
     &\int_{\R^n} \sum_{0<j\leq k}(w-\psi_L)_+(a+\e j,x) \partial_{\e}^\alpha w(a+\e j, x)
     +  \int_a^T\frac{1}{\Lambda}  \| v  \|_{H^{\frac{\sigma}{2}}(\R^n)}^2 \\
      &\quad \leq \quad C_{n,\Lambda , \sigma} \left[\int_a^t \int_{\R^n} v + v^2 + 
        \chi_{\{v>0\}}+ |f|v\right].
    \end{aligned}
   \end{equation}
  We now control the piece in time. We write $w=(w-\psi_L)_+ + (w-\psi_L)_- + \psi_L$.
  The term 
   \[
    (w-\psi_L)_+(a+\e j) \partial_{\e}^\alpha (w-\psi_L)_-(a+\e j) \geq 0,
   \]
  so we may ignore this term on the left hand side of the equation. We will however utilize it in a 
  crucial way in the second DeGiorgi lemma. We also recognize that $(w-\psi_L)_+(x,a)=0$ for all $x\in \R^n$ 
  by assumption, so that
   \[
    \sum_{0<j\leq k}\e v(a+\e j,x) \partial_{\e}^\alpha v(a+\e j, x)
     = \sum_{-\infty<j\leq k}\e v(a+\e j,x) \partial_{\e}^\alpha v(a+\e j, x).
   \]
  We move the term involving $\partial_{\epsilon}^{\alpha} \psi_L$ to the right hand side of the equation
  and use Lemma \ref{l:dfracbound} to control this term by the $L^1$ norm of $v$. We now utilize 
  Lemmas \ref{l:extend} and \ref{l:decompose} to conclude
   \[
    \begin{aligned}
     &\int_{\R^n} \int_a^T \frac{v^2(t,x)}{(t-a)^{\alpha}} + 
      \int_{\R^n} \int_a^t \int_a^t \frac{(v(t,x)-v(s,x))^2}{(t-s)^{1+\alpha}} \ ds \ dt 
      + \int_a^T\frac{1}{\Lambda}  \| v  \|_{H^{\frac{\sigma}{2}}(\R^n)}^2 \\
      &\leq \quad C_{n,\Lambda , \sigma, \alpha} \left[\int_a^t \int_{\R^n} v + v^2 + 
        \chi_{\{v>0\}} + |f|v\right].
    \end{aligned}
   \] 
  Since $|f|\leq 1$, the term $(|f|+1)v$ is controlled by $2v$. 
  Using Lemma \ref{l:ext}  we then conclude   
    \begin{equation}  \label{e:step11}
     \begin{aligned}
      &\int_{\R^n}  \| v \|_{H_{\alpha /2}(\R)}^2 
       + \int_a^T \frac{1}{\Lambda}  \| v  \|_{H^{\frac{\sigma}{2}}(\R^n)}^2 \\
      &\quad \leq  C_{n,\Lambda , \sigma, \alpha} \left[\int_a^t \int_{\R^n} v + v^2 + 
         \chi_{\{v>0\}} \right].
     \end{aligned}
    \end{equation}
   We now use the Sobolev embedding 
   $H^{\alpha /2}(\R) \subset L^{\frac{2}{1-\alpha}}(\R) $ to obtain from \eqref{e:step11} 
    \begin{equation}  \label{e:step2}
     \begin{aligned}
       &\int_{\R^n} \left( \int_{a}^T v^{\frac{2}{1-\alpha}} \right)^{1-\alpha}
            + \int_a^T  \left( \int_{\R^n} v^{\frac{2n}{n-\sigma}}   \right)^{\frac{n- \sigma}{n}} \\
             & \  \leq C_{n,\Lambda, \sigma , \alpha}
              \int_a^T \int_{\R^n} {v + v^2 + \chi_{ \{v>0\} }}. 
     \end{aligned}
    \end{equation}
   In the following computation we use Holder's inequality twice with
    \[
     \frac{\beta}{p_1} + \frac{1 - \beta}{p_2} = \frac{1}{p} = \frac{\beta}{p_3} + \frac{1 - \beta}{p_4}.
    \]
   We now interpolate as follows
    \[
     \begin{aligned}
      \int_a^T \int_{\R^n} v^p &=  \int_a^T \int_{\R^n} v^{p \beta} v^{ p (1-\beta)} \\
                          &\quad \leq \int_a^T \left( \int_{\R^n} v^{p_1}\right)^{\frac{p \beta}{p_1}}
                                           \left( \int_{\R^n} v^{p_2}\right)^{\frac{p (1-\beta) }{p_2}} \\
     &\quad \leq \left( \int_a^T \left( \int_{\R^n} v^{p_1}\right)^{\frac{p_3}{p_1}} \right)^{\frac{\beta p}{p_3}}
       \left( \int_a^T \left( \int_{\R^n} v^{p_2}\right)^{\frac{p_4}{p_2}} \right)^{\frac{p (1-\beta)}{p_4}}
     \end{aligned}
    \]
   so that 
    \[
     \left( \int_a^T \int_{\R^n} v^p \right)^{\frac{2}{p}}  \leq
     \beta \left( \int_a^T \left( \int_{\R^n} v^{p_1}\right)^{\frac{p_3}{p_1}} \right)^{\frac{2}{p_3}} +
      (1- \beta) \left( \int_a^T \left( \int_{\R^n} v^{p_2}\right)^{\frac{p_4}{p_2}} \right)^{\frac{2}{p_4}}.
    \]
   We now choose 
    \[
     p_1 = 2, \qquad p_2 = \frac{2n}{n - \sigma}, \qquad p_3 = \frac{2}{1- \alpha}, \qquad p_4 = 2
    \]
   so 
    \begin{equation}  \label{e:p}
     p=2\left(\frac{\alpha n+\sigma}{\alpha n+(1-\alpha)\sigma}\right) 
     \text{   and   } \beta =\frac{\sigma}{\alpha n + \sigma}. 
    \end{equation}
   By Minkowski's inequality 
    \[
     \left( \int_a^T \left( \int_{\R^n} v^2 \right)^{\frac{1}{1-\alpha}} \right)^{1-\alpha}
     \leq \int_{\R^n} \left( \int_a^T v^{\frac{2}{1-\alpha}}\right)^{1-\alpha}.
    \]
   Then
    \begin{equation}  \label{e:step3}
     \|  v \|_{L^{p}(\R^n \times [a,T])}^2 \leq C_{n,\Lambda , \sigma} \left[\int_a^t \int_{\R^n} v + v^2 + 
         \chi_{\{v>0\}} \right].
    \end{equation} 
   where $p>2$ is defined in \eqref{e:p}. 
   
   We now begin the Nonlinear recurrence. We let $L_K = \frac{1}{2}(1-2^{-k})$. 
   We also define
   \[
    U_k := \int_a^t \int_{\R^n} (w-\psi_{L_{k}})_+ + (w-\psi_{L_k})_+^2 + \chi_{ \{w-\psi_{L_k} >0\} }. 
         \chi_{\{v>0\}}
   \]
   Using Tchebychev's inequality and then \eqref{e:step3}, we have 
     \begin{alignat*}{1}
      \int_Q (w-\psi_{L_k})_+ &\leq \int_{Q}(w-\psi_{L_{k-1}})_+ 
                 \chi_{ \{w-\psi_{L_{k-1}} > \frac{1}{2^{k+1}}\} }  \\
                               &\leq (2^{k+1})^{p-1} \int_{Q}(w-\psi_{L_{k-1}})_+^{p} \\
                               &\leq (2^{k+1})^{p-1} C_n^p U_{k-1}^{p/2} \\
      \int_Q \chi_{ \{w-\psi_{L_k} >0\} } &\leq (2^{k+1})^p  \int_{Q}(w-\psi_{L_{k-1}})_+^{p} \\
                                &\leq (2^{k+1})^{p} C_n^p U_{k-1}^{p/2} \\
      \int_Q (w-\psi_{L_k})_+^2 &\leq \int_{Q}(w-\psi_{L_{k-1}})_+^2 
                 \chi_{ \{w-\psi_{L_{k-1}} > \frac{1}{2^{k+1}}\} }  \\  
                                &\leq (2^{k+1})^{p-2} \int_Q (w-\psi_{L_{k-1}})_+^{p}\\
                                &\leq (2^{k+1})^{p-2} C_n^p U_{k-1}^{p/2}.                     
     \end{alignat*}
   From the above three inequalities we conclude  
    \begin{equation}  \label{e:nonlinear}
     U_k \leq \overline{C}^k U_{k-1}^{p/2}, \qquad \text{ for every  } k \geq 0,
    \end{equation}
   for some universal constant $\overline{C}$ that depends only on $n,\Lambda,\sigma,\alpha$. 
   Since $p>2$ it follows from the nonlinear recurrence relation \eqref{e:nonlinear} 
   that there exists some sufficiently small
   constant $\kappa_0$ depending only on $n,\Lambda,\sigma,\alpha$ (but not $\epsilon$ or $a$) 
   such that if $U_1 \leq \kappa_0$, it 
   follows that $\lim_{k \to \infty} U_k \to 0$. From \eqref{e:step2} we have 
    \[
     U_1 \leq C\int_a^T \int_{\R^n}{|w-\psi|^2} \ dx \ dt.
    \]
   Furthermore, $U_k \to 0$ implies that 
    \[
     w \leq \psi + 1/2 \text{   for   } t \in [a,T] \times \R^n.
    \]  
 \end{proof}


 For this next corollary we define 
  \[
    \overline{\psi}(t,x):= 
       (|x|^{\sigma /4}-1)_+ + (|t|^{\alpha /4}-1)_+ \mbox{  if  } t\leq 0 
  \]
 
 \begin{corollary}  \label{c:rewrite}
  There exists a constant $\delta \in (0,1)$ and $\epsilon_0>0$, 
  both depending only on $n,\sigma,\Lambda,\alpha$ such that 
  for any solution $w: [a,0]\times \R^n \hookrightarrow \R$ to \eqref{e:firstd} with
  $\epsilon< \epsilon_0$, $a\leq -1$, and $\|f\|_{L^{\infty}((a,T)\times \R^n)}\leq 1$ satisfying
   \[
    w(t,x) \leq 1 + \overline{\psi}(t,x)  \text{   on  } [a,0] \times \R^n
   \]
  and
   \[
    | \{w>0\} \cap ([-2,0] \times B_2)|  \leq \delta
   \]
  we have
   \[
    w(t,x) \leq \frac{1}{2} \text{   for  } (t,x) \in [-1,0] \times B_1.  
   \]
 \end{corollary}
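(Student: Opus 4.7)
The plan is to deduce Corollary \ref{c:rewrite} from Lemma \ref{l:degiorgi1} by verifying its two hypotheses: the smallness of $\int_a^0\int_{\R^n}(w-\psi)_+^2$ and the initial bound $w(a,x)\le \psi(a,x)+\tfrac12$. The key structural observation is that $(w-\psi)_+>0$ forces both $w>0$ (since $\psi\ge 0$) and $1+\overline\psi>\psi$ (since $w\le 1+\overline\psi$). Because the exponents $\sigma/2, \alpha/2$ of $\psi$ strictly exceed the exponents $\sigma/4,\alpha/4$ of $\overline\psi$, the set $\{\psi-\overline\psi<1\}$ is contained in a bounded region $\Omega^\ast=[-T^\ast,0]\times B_{R^\ast}$ with $T^\ast,R^\ast$ depending only on $\sigma,\alpha$. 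Hence
\[
\operatorname{supp}(w-\psi)_+\ \subset\ \Omega^\ast\cap \{w>0\},\qquad (w-\psi)_+\ \le\ 1+\overline\psi-\psi\ \le\ C(\sigma,\alpha)\text{ on }\Omega^\ast.
\]

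For the initial condition at $t=a$, the inequality $w(a,x)\le 1+\overline\psi(a,x)$ implies $w(a,x)\le \psi(a,x)+\tfrac12$ as soon as $\psi(a,x)-\overline\psi(a,x)\ge \tfrac12$ for every $x$. The spatial contribution to $\psi-\overline\psi$ is nonnegative, so it suffices that the time contribution $|a|^{\alpha/2}-|a|^{\alpha/4}\ge \tfrac12$, which holds once $|a|$ exceeds an explicit threshold depending only on $\alpha$; this is secured by the hypothesis $a\le -1$, if necessary after a preliminary parabolic rescaling that preserves the equation.

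For the $L^2$ smallness I estimate
\[
\int_a^0\!\int_{\R^n}(w-\psi)_+^2\ \le\ C(\sigma,\alpha)\,\bigl|\,\Omega^\ast\cap\{w>0\}\,\bigr|.
\]
The measure hypothesis controls $|\{w>0\}|$ on $[-2,0]\times B_2$, which generally is strictly smaller than $\Omega^\ast$. This mismatch is the main technical obstacle: it is overcome by applying the natural parabolic rescaling $\tilde w(t,x)=w(\mu^\sigma t,\mu x)$ with a factor $\mu=\mu(\sigma,\alpha)\le 1$ chosen so that the rescaled support set $\Omega^\ast$ fits inside $[-2,0]\times B_2$. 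The rescaled problem is again of the form \eqref{e:firstd} with the same ellipticity constants (up to absorbing universal factors into $\Lambda$), with $\|f\|_\infty\le 1$ preserved, and with mesh size $\mu\epsilon<\mu\epsilon_0$, so fixing $\epsilon_0$ small allows the discretization to be compatible. After rescaling the hypothesis gives $|\{\tilde w>0\}\cap \Omega^\ast|\le \delta$ and hence $\int(\tilde w-\psi)_+^2\le C(\sigma,\alpha)\delta$, which is $\le \kappa_0$ once $\delta=\delta(n,\sigma,\Lambda,\alpha)$ is chosen sufficiently small.

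With both hypotheses verified for the discretized approximants $\tilde w_\epsilon$, Lemma \ref{l:degiorgi1} yields $\tilde w_\epsilon\le \tfrac12+\psi$ on $[a,0]\times\R^n$. Passing to the limit $\epsilon\to 0$ (as in the rest of the paper), the same bound holds for $\tilde w$, and undoing the rescaling gives $w\le \tfrac12+\psi$ on the corresponding cylinder. Since $\psi\equiv 0$ on $[-1,0]\times B_1$, this yields $w\le \tfrac12$ there, as required. The hardest point of the argument is the geometric reconciliation in the third step between the support set $\Omega^\ast$ of $(w-\psi)_+$ and the prescribed measure cylinder $[-2,0]\times B_2$; once that rescaling is in place, the remainder is a direct application of Lemma \ref{l:degiorgi1}.
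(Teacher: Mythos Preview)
Your overall strategy---localize the support of $(w-\psi)_+$ using the growth gap between $\psi$ and $\overline\psi$, then rescale so that Lemma~\ref{l:degiorgi1} applies---is the right idea, and is essentially what the paper does.  But the execution has a genuine gap in the rescaling step.

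You rescale once, around the origin, by $\tilde w(t,x)=w(\mu^{\sigma}t,\mu x)$ with $\mu\le 1$.  After applying Lemma~\ref{l:degiorgi1} to $\tilde w$ you obtain $\tilde w\le \tfrac12+\psi$, and you then assert that ``undoing the rescaling'' gives $w\le\tfrac12+\psi$ and hence $w\le\tfrac12$ on $[-1,0]\times B_1$.  This is not correct: undoing the rescaling gives $w(s,y)\le \tfrac12+\psi(\mu^{-\sigma/\alpha}s,\mu^{-1}y)$, and the right-hand side vanishes only on the \emph{shrunken} cylinder $[-\mu^{\sigma/\alpha},0]\times B_\mu$, which is strictly smaller than $[-1,0]\times B_1$.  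The same issue undermines your ``preliminary rescaling'' for the initial condition: pushing $|a|$ large by zooming in also shrinks the conclusion set.  (A secondary point: the correct parabolic exponent here is $\sigma/\alpha$, not $\sigma$, since $\partial_t^\alpha$ scales like $\tau^{-\alpha}$.)

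The paper fixes this by rescaling \emph{pointwise}: for each $(t_0,x_0)\in[-1,0]\times B_1$ it sets $w_R(s,y)=w(t_0+s/R^{\sigma/\alpha},x_0+y/R)$ with a fixed large $R$ depending only on $\sigma,\alpha$.  Then the big cylinder $[-R^{\sigma/\alpha},0]\times B_R$ in the new variables corresponds to a unit cylinder around $(t_0,x_0)$, which sits inside $[-2,0]\times B_2$; this is what makes the measure hypothesis usable and simultaneously handles the initial condition.  Lemma~\ref{l:degiorgi1} then gives $w_R(0,0)=w(t_0,x_0)\le\tfrac12$, and varying $(t_0,x_0)$ yields the full conclusion.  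Your argument is easily repaired by incorporating this translation-before-dilation step.
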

 
 \begin{proof}
  If $R \geq 2^{1/\sigma}$ and $R^{\sigma / \alpha}\geq 2^{1/\alpha}$, then 
   \[
    \begin{aligned}
     &1+((|y|+1)^{\sigma /4}-1)_+  \leq (|y|^{\sigma /2}) \quad \text{ if } |y| \geq R \\
     &1+((|t|+1)^{\alpha /4}-1)_+  \leq (|t|^{\alpha /2}) \quad \text{ if } |t| \geq R^{\sigma / \alpha}. 
    \end{aligned}
   \]
  Thus, we may choose $R$ even larger such that 
  \begin{equation} \label{e:Rbig}
   2+\overline{\psi}(|t|+1,|y|+1) \leq \psi(|t|,|y|) \text{  if  }  (y,t) 
   \notin [-R^\frac{\sigma}{\alpha},0] \times B_R(0).
  \end{equation} 
  $R$ is now fixed and is dependent on  $n, \sigma, \alpha$.

  For any
  $(t_0,x_0) \in [-1,0] \times B_1$ with $t_0 \in a+\epsilon \Z_+$
  , we introduce the rescaled function $w_R$ defined on 
  $[R^{\sigma / \alpha}(a-t_0) , R^{\sigma /\alpha} -t_0] \times \R^n$ by
   \[
    w_R(s,y) := w(t_0 + \frac{s}{R^{\sigma /\alpha}} , x_0 + \frac{y}{R}).
   \]
  The function $w_R$ satisfies equation \eqref{e:firstd} with intial time $R^{\sigma / \alpha}(a-t_0)$,
  with discrete time increment $\epsilon R^{\sigma / \alpha}$ and
  with a rescaled kernel
   \[
    K_R(t,x,y) := 
     \frac{1}{R^{n+\sigma}} K(t_0 + \frac{t}{R^{\sigma / \alpha}}, x_0 + \frac{x}{R}, x_0 + \frac{y}{R}).
   \]
  This rescaled kernel satisfies 
   \[
    \frac{1}{\Lambda} \chi_{ \{|x-y| \leq 3R \}} \frac{1}{|x-y|^{n+\sigma}}
     \leq K_R(t,x,y) \leq \Lambda \frac{1}{|x-y|^{n+\sigma}},
   \]
  which is a stronger hypothesis. 
  The right hand side of the equation is 
   \[
    f_R := f(t_0 +\frac{s}{R^{\sigma /\alpha}},x_0 + \frac{y}{R}),
   \]
  with $|f_R|\leq 1$. 
  We then choose $\epsilon_0 R^{\sigma / \alpha}=1$, 
  so that $\epsilon R^{\sigma / \alpha}<1$. We can apply Lemma \ref{l:degiorgi1} to $w_R$. In \cite{ccv11}
  it is shown that for $x_0 \in B_1$,  
   \[
    (|x_0 + x/R|^{\sigma / 4} -1)_+ \leq (|x_0 + x|^{\sigma / 4} -1)_+.
   \]
  which is also true in the one dimensional case, so 
   \[
    (|t_0 + \frac{t}{R^{\sigma / \alpha}}|^{\alpha / 4} -1)_+ \leq (|t_0 + t|^{\alpha / 4} -1)_+,
   \]
  and we conclude that $\overline{\psi}(t_0 + t/R^{\sigma / \alpha}, x_0 + x/R) \leq \overline{\psi}(t_0 + t, x_0 + x)$.
  Now since $\overline{\psi}(t,x)$ increases with respect to $|x|$ and $|t|$ for $|t|,|x|>1$ we have
   \[
    w_R(s,y) \leq 2 + \overline{\psi}(t_0 + \frac{s}{R^{\sigma / \alpha}}, x_0 + \frac{y}{R})  
     \leq 2 + \overline{\psi}(t_0 + s, x_0 + y) \leq 2 + \overline{\psi}(|s|+1,|y|+1).
   \]
  Then utilizing \eqref{e:Rbig} we have chosen $R$ large enough so that $w_R(s,y) \leq \psi(s,y)$ for any 
  $(s,y) \notin [-R^{\sigma / \alpha},0]\times B_R$.
   \[
    \begin{aligned}
     \int_{R^{\frac{\sigma}{\alpha}}(a-t_0)}^{0} \int_{\R^n} [w_R(s,y)- \psi(s,y)]_+^2
      &= \int_{-R^{\frac{\sigma}{\alpha}}}^0 \int_{|y| \leq R} [w_R(s,y)- \psi(s,y)]_+^2   \\
      &\leq \int_{-R^{\frac{\sigma}{\alpha}}}^0 \int_{|y| \leq R} [w_R(s,y)]_+^2  \\
      &= R^{n+ \frac{\sigma}{\alpha}} \int_{t_0 -1}^{t_0} \int_{\{x_0\}+B_1}[w(s,y)]_+^2 \\
      &\leq R^{n+ \frac{\sigma}{\alpha}} \int_{-2}^0 \int_{B_2} [w(s,y)]_+^2 \\
      &\leq R^{n+ \frac{\sigma}{\alpha}}(1+\overline{\psi}(-2,2))^2 \delta.
    \end{aligned}
   \]
  Choosing $\delta = R^{-n- \frac{\sigma}{\alpha}}(1+\overline{\psi}(-2,2))^{-2} \kappa_0$ gives that
  $w(t_0,x_0) \leq 1/2$ for $(t_0,x_0) \in (-1,0) \times B_1$ with $t_0 \in a + \epsilon \Z_+$, and
  therefore for all $(t,x) \in (-1,0) \times B_1$. 
 \end{proof}

\section{The Second De Giorgi Lemma}  \label{s:lemma2}
 For this section we will need the following functions
  \[
   F_1(x) := \sup (-1, \inf (0,|x|^2 -9)) \qquad F_2(t):= \sup (-1, \inf (0,|t|^2 -16)).
  \]
 We note that $F_i$ are both Lipschitz. $F_1$ is compactly supported in $B_3$ and equal to $-1$ in $B_2$. 
 Similarly, $F_2$ is compactly supported in $[-4,4]$ and is equal to $-1$ in $[-3,3]$. We will only use $F(t)$
 for $t \leq 0$. 
 
 For $\lambda < 1/3$, we define 
  \[
   \psi_{\lambda}(t,x) :=
      ((|x|- \lambda^{-4/ \sigma})^{\sigma /4}-1)_+   \chi_{\{|x|\geq \lambda^{-4/\sigma} \}} +
      ((|t|- \lambda^{-4/ \alpha})^{\alpha /4}-1)_+   \chi_{\{|t|\geq \lambda^{-4/\alpha} \}}. 
  \]  
 Our Lemma will involve the following sequence of five cutoffs:
  \[
   \phi_i := 2+ \psi_{\lambda^3}(t,x)  + \lambda^iF_1(x) + \lambda^iF_2(t) 
  \]
 for $0 \leq i \leq 4$. 
 
 \begin{lemma}  \label{l:second}
  Let $\delta$  be the constant defined in Corollary \ref{c:rewrite}. 
  For $0<\mu <1/8$ fixed, there exists $\lambda \in (0,1)$, depending only on $n,\Lambda, \sigma,\alpha$  
  such that for any solution $w:[a,0]\times \R^n \to \R$ to \eqref{e:firstd} with
  $0< \e <\e_0$, $a\leq -4$, and $|f|\leq 1$satisfying
     \[
      \begin{aligned}
      w(t,x) &\leq 2 + \psi_{\lambda^3}(t,x) \text{  on  } [a,0] \times \R^n \\
      &|\{w<\phi_0\} \cap ((-3,-2) \times B_1)| \geq \mu,
     \end{aligned}
     \]
  then we have 
   \[
    |\{w>\phi_4\} \cap ((-2,0) \times B_2)| \leq \delta.
   \]
 \end{lemma}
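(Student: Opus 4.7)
I would argue by contradiction. Assume $|\{w>\phi_4\}\cap((-2,0)\times B_2)|>\delta$. Combined with the hypothesis on $\{w<\phi_0\}$, this gives a definite set where $w$ lies below $\phi_0$ in the past and another where $w$ lies above $\phi_4$ in the future. My plan is to iterate the first De Giorgi Lemma through the four intermediate levels $\phi_0<\phi_1<\phi_2<\phi_3<\phi_4$, using the nonlocality of the Caputo derivative—rather than an auxiliary isoperimetric inequality—to transfer the "past smallness'' directly into an energy gain for each cut-off $v_i:=(w-\phi_i)_+$.

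Plugging $\e v_i$ into the discretized equation \eqref{e:firstd} for each $i\in\{0,1,2,3\}$, the elliptic part produces the usual $H^{\sigma/2}$-energy of $v_i$ in space, with tail contributions controlled by the slow outer growth of $\psi_{\lambda^3}$ (this is exactly why $\psi_{\lambda^3}$ is shifted out to $|x|\geq\lambda^{-4/\sigma}$ and $|t|\geq\lambda^{-4/\alpha}$). Writing $w=\phi_i+v_i-(w-\phi_i)_-$, the Caputo contribution decomposes as
\[
v_i\,\partial_\e^\alpha w \;=\; v_i\,\partial_\e^\alpha v_i \;-\; v_i\,\partial_\e^\alpha (w-\phi_i)_- \;+\; v_i\,\partial_\e^\alpha \phi_i.
\]
The last term is controlled by a discrete analogue of Lemma \ref{l:dfracbound} (applied separately to the $\lambda^i F_1$ and $\lambda^i F_2$ pieces), and the first term is treated by Lemma \ref{l:decompose}. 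The essential novelty is the middle term: since $v_i(t,x)$ and $(w-\phi_i)_-(t,x)$ cannot simultaneously be positive, whenever $v_i(t,x)>0$ it reduces to the manifestly non-negative expression
\[
-v_i(t,x)\,\partial_\e^\alpha (w-\phi_i)_-(t,x) \;=\; v_i(t,x)\cdot\e\alpha\sum_{s<t}\frac{(w-\phi_i)_-(s,x)}{(t-s)^{1+\alpha}}.
\]
This acts as a free positive piece on the left-hand side of the energy identity: every unit of past-time mass of $w$ below $\phi_i$ feeds into the energy of $v_i$ at later times. This is the "nonlocal simplification'' promised in the introduction.

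Combining this memory term with the standard first-Lemma machinery, I obtain at each level $i$ a strict decrease in the measure of $\{w\geq\phi_{i+1}\}$ in $(-2,0)\times B_2$ compared to $\{w\geq\phi_i\}$, with a loss quantitatively controlled by $\mu$ (the past-smallness budget) and by the gap $\phi_{i+1}-\phi_i\sim\lambda^i(1-\lambda)$. Iterating four times and choosing $\lambda$ small enough so that the constants compound correctly forces the measure of $\{w>\phi_4\}$ below $\delta$, contradicting the assumption. The main obstacle is the delicate quantitative balance in the choice of $\lambda$: it must be small enough that (i) $\psi_{\lambda^3}$ remains negligible in the working region and the elliptic tail does not overwhelm the gained Caputo-memory term, (ii) the shrinking gap $\lambda^i(1-\lambda)$ at deeper levels is still comparable to the memory gain, and (iii) the four iterations produce a net contraction below $\delta$, all with constants uniform in $\e$ and $a$ so the estimate passes to the weak solution $w$ in the limit.
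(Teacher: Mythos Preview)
Your identification of the Caputo memory term $v_i\,\partial_\e^\alpha(w-\phi_i)_-$ as the crucial nonnegative contribution is correct and matches the paper. But there is a real gap in how you propose to exploit it.

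That memory term is \emph{pointwise in $x$}: at $(t,x)$ it is only useful if $(w-\phi_i)_-(s,x)>0$ for a nontrivial set of past times $s$ at the \emph{same} spatial point. The hypothesis $|\{w<\phi_0\}\cap((-3,-2)\times B_1)|\geq\mu$ is a space--time measure statement; for a given $x\in B_2$ the past slice $\{s\in(-3,-2):w(s,x)<\phi_0\}$ may well be empty, and then your memory gain at that $x$ vanishes no matter how large $v_i(t,x)$ is. So the energy identity with the memory term thrown in does not by itself yield the ``strict decrease in measure at each level'' you assert, and iterating four times does not repair this.

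The paper handles this by inserting a \emph{spatial} spreading step before invoking the time memory. When you reduced the elliptic contribution to ``the usual $H^{\sigma/2}$-energy of $v_i$'', you discarded the cross term $B((w-\phi_1)_+,(w-\phi_1)_-)$. That term (used as in \cite{ccv11}) shows that on a time set $\Sigma\subset(-3,-2)$ with $|\Sigma|\geq c\mu$, the smallness propagates from $B_1$ to almost all of $\Sigma\times B_2$ at level $\phi_2$. Only then can one isolate a set $A\subset B_2$ of points with genuine per-point past smallness, and apply the memory term at level $\phi_3$ on $A\times(-2,0)$; one final Chebyshev step to level $\phi_4$ finishes. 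The five cut-offs are thus not four repetitions of one mechanism: $\phi_1$ feeds the spatial step, $\phi_3$ the temporal step, and $\phi_2,\phi_4$ are Chebyshev thresholds in between. Without the spatial cross term your argument does not close. (The contradiction framing is also unnecessary---the paper argues directly---but that is cosmetic.)
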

 The main idea of the proof is as follows: We utilize that $w$ satisfies the equation as well as
 the nonlocal nature of the spatial operator to make use of the assumption that the set where $w$ is small
 on $(-3,-2)\times B_1$ is significant enough, and show there is a large set of points in $(-3,-2)\times B_2$ in which
 $w$ is small. We then use the nonlocal nature of the time derivative to show that this implies that the set in 
 $(-2,0) \times B_2$ on which $w$ is large is a very small set. 
  
 \begin{proof}
  Throughout this proof the constants $c,C$ will denote constants that
  only depend on the parameters $n,\sigma, \alpha, \Lambda$ and nothing else. They can change from line to 
  line in the proof. We first consider $0<\lambda<1/3$ and small enough such that 
   \[
    \psi_{\lambda}(t,x) =0 \quad \text{  if  } (t,x) \in [-2,0] \times B_2.
   \]
  We split the proof into three steps. 
  
  \textbf{First Step: The energy inequality.} We return to the energy inequality \eqref{e:energyineq}, but this
  time we utilize the two positive terms that were previously ignored. We have for $v=(u-\phi_1)_+$
   \[
    \begin{aligned}
      & \int_{\R^n} \int_a^T (w-\phi_1)_+ \partial_{\e}^{\alpha} [(w-\phi_1)_+ 
          + (w-\phi_1)_- 
          + \phi_1   ] dt dx \\ 
      &\quad \int_a^T B((w-\phi_1)_+,(w-\phi_1)_+) + 
        B((w-\phi_1)_+,\phi_1) + B((w-\phi_1)_+,(u-\phi_1)) dt \\
      &\leq \int_a^T \int_{\R^n} (w-\phi_1)_+ |f|.
    \end{aligned}
   \]
  Since $(w-\phi_1)_+$ is compactly supported in $[-4,0] \times B_3$, 
   \[
   \int_{Q} |f| (w-\phi_1)_+ \leq C\lambda
   \]
  The spatial pieces involving $B$ are controlled as follows as in \cite{ccv11}:
   \[
    \begin{aligned}
    &|B((w-\phi_1)_+,\phi_1)| \leq \frac{1}{2}B((w-\phi_1)_+,(w-\phi_1)_+)   \\
            &\qquad + 2\int \int [\phi_1(x)-\phi_1(y)]^2K(x,y)[\chi_{B_3(x)}].
    \end{aligned}
   \]
  The first term is absorbed on the left hand side and
   \[
    2\int \int [\phi_1(x)-\phi_1(y)]^2K(x,y)[\chi_{B_3(x)}] \leq C\lambda^2.
   \]
  Then our inequality becomes
   \[
    \begin{aligned}
      C\lambda^2 &\geq \int_{\R^n} \int_a^T (w-\phi_1)_+ \partial_{\epsilon}^{\alpha} (w-\phi_1)_+ dt dx \\
       &\quad + \int_{\R^n} \int_a^T (w-\phi_1)_+ \partial_{\epsilon}^{\alpha} (w-\phi_1)_- dt dx \\
       &\quad + \int_{\R^n} \int_a^T (w-\phi_1)_+ \partial_{\epsilon}^{\alpha} \phi_1    dt dx \\ 
       &\quad \int_a^T \frac{1}{2}B(v,v) + B(v,(u-\phi_1)_-) dt.
    \end{aligned}
   \]
  The time piece is controlled as follows:
  As shown in the first De Giorgi Lemma \ref{l:degiorgi1} we may use the Sobolev embedding to obtain
   \[
    \int_{\R^n} \int_a^T (u-\phi_1)_+ \partial_{\epsilon}^{\alpha} (u-\phi_1)_+ 
    \geq c \int_{\R^n} \left(\int_a^T (u-\phi_1)_+^p \right)^{2/p}.
   \]
  By utilizing that $(u-\phi_1)_+$ is compactly supported in the time interval $[-4,0]$ we have the bound
   \begin{equation}  \label{e:sob1}
    \int_{\R^n} \int_a^T (u-\phi_1)_+ \partial_{\e}^{\alpha} (u-\phi_1)_+ 
    \geq c \int_{\R^n} \int_a^T (u-\phi_1)_+^2 .
   \end{equation}
  We now control the other time piece by moving it to the right hand side of the equation and showing:
   \begin{equation}
    0\leq -\int_{\R^n} \int_a^T (u-\phi_1)_+ \partial_{\e}^{\alpha} \phi_1 
     \leq \int_{\R^n} \int_a^T c_1 (u-\phi_1)_+^2 + \frac{1}{4 c_1} (\partial_{\e}^\alpha \phi_1)^2 dt dx.
   \end{equation}
  By choosing $c_1$ appropriately we absorb the first term on the left hand side of the equation 
  by using \eqref{e:sob1}. 
  Now we also have for $t \in [a,0]$ with $t\in a +\epsilon \Z_+$,
    \[
     \begin{aligned}
      0 &\leq - \partial_{\epsilon}^{\alpha} \tilde{\psi}_{\lambda^3} \leq C \lambda^3 \\
      0 &\leq - \partial_{\epsilon}^{\alpha} \lambda^i F_2(t) \leq C\lambda^i.
     \end{aligned}
    \] 
  Thus, 
   \[
    \begin{aligned}
    &\int_{\R^n} \int_a^T \left( \frac{1}{2} (u-\phi_1)_+ \partial_{\e}^{\alpha} (u-\phi_1)_+ 
      +(u-\phi_1)_+ \partial_{\e}^{\alpha} (u-\phi_1)_- \right) dt dx \\
    &\quad +\int_a^T \frac{1}{2}B((u-\phi_1)_+,(u-\phi_1)_+) +B((u-\phi_1)_+,(u-\phi_1)_-)\leq C \lambda^2  
    \end{aligned}
   \]
  and so
   \begin{equation}  \label{e:good}
    \int_{\R^n} \int_a^T (u-\phi_1)_+ \partial_{\e}^{\alpha} (u-\phi_1)_-  dt dx 
      +\int_a^T B((u-\phi_1)_+,(u-\phi_1)_+) dt  \leq C \lambda^2  
   \end{equation}

  \textbf{Second step: An estimate on those time slices where the ``good'' spacial term helps.} We recall
   that $\mu <1/8$ is fixed from the beginning of the proof. From our hypothesis,
    \[
     |\{w<\phi_0\} \cap ((-3,-2) \times B_1)| \geq \mu,
    \]
   the set of times $\Sigma$ in $(-3,-2)$ for which $|w(\cdot, T)<\phi_0| \geq \mu/4$ has at least measure
   $\mu/(2|B_1|)$. As in \cite{ccv11} we obtain
    \[
     \int_\Sigma \int_{\R^n} (w-\phi_1)_+ dx dt \leq C\Lambda \lambda^2.
    \]
   Now 
    \[
     \{w-\phi_2>0\}\cap(\Sigma \times B_2) \subset \{w-\phi_1>\lambda/2\} \cap (\Sigma \times B_2),
    \] 
   and so from Tchebychev we have
    \[
     |\{w>\phi_2\} \cap (\Sigma \times B_2)| \leq C\lambda.
    \]
   which we rewrite as  
    \begin{equation}  \label{e:good2}
     | \{w \leq \phi_2\} \cap (\Sigma \times B_2)| 
      \geq |\Sigma \times B_2| - C\lambda = \mu /2 - C\lambda
    \end{equation}
   This will be positive for $\lambda$ chosen small enough. This will only depend on $\mu,n,\sigma,\alpha$.

  \textbf{Third Step: Utilizing the extra good piece in time.} 
  We now utilize the second ``good'' extra term in time. 
  Since we chose $\psi_{\lambda^3}$  in place of $\psi_{\lambda}$  
  in the definition of $\phi$, then substituting $\phi_3$ in the place of 
  $\phi_1$ in \eqref{e:good} we obtain
   \begin{equation}  \label{e:l6}
    \int_{\R^n} \int_a^T (u-\phi_3)_+ \partial_{\epsilon}^{\alpha} (u-\phi_3)_-  dt dx 
      +\int_a^T B((u-\phi_3)_+,(u-\phi_3)_+) dt  \leq C \lambda^6.  
   \end{equation}
  We define the set
   \[
    A:= \{x \in B_2 :  |(x_0 \times \Sigma) \cap \{w\leq \phi_2\}| \geq |\Sigma|/2\}.
   \]
  Then from \eqref{e:good2} we obtain 
   \[
    |A|\geq |B_2| - \frac{C\lambda}{2|\Sigma|} \geq |B_2|(1 - C\lambda \mu^{-1}) .
   \]
  We choose $\lambda$ small enough such that 
  $|B_2|C\lambda \mu^{-1} \leq \delta /4$, so that
  \begin{equation} \label{e:A}
   |B_2 \setminus A| \leq \delta /4,
  \end{equation} 
  where $\delta$ is the constant in the statement of the theorem. 
  Recalling that for $x_0 \in A$
   \[
    |\{w(x_0,t) \leq \phi_2 \} \cap (x_0 \times [-3,-2])| \geq \frac{\mu}{2|B_2|},
   \]
  then for $x_0 \in A$ and $t_0\in[-2,0]$ and $t_0 \in a +\epsilon \Z_+$ with $(u-\phi_3)(x_0,t_0)>0$
   \[
    \begin{aligned}
     \partial_{\epsilon}^{\alpha} (w-\phi_3)_-(t_0) 
      &\geq \alpha \epsilon \sum_{0\leq i<j} \frac{-(\phi_3-w)(\epsilon i)}{(\epsilon(k-i))^{1+\alpha}} \\
      &\geq \alpha \int_{[-3,-2]\cap \{w>\phi_3\}} 
         \frac{\lambda^2}{|3|^{1+\alpha}}ds \\
      &\geq \alpha \mu /(2|B_1|) 
         \frac{\lambda^2}{|3|^{1+\alpha}} \\
      &\geq c\mu \lambda^2
    \end{aligned}
   \]
  Putting the above inequality together with \eqref{e:l6}:
   \[
    \begin{aligned}
     C\lambda^6 &\geq \int_{\R^n} \int_a^T (w-\phi_3)_+ \partial_{\epsilon}^{\alpha} (w-\phi_3)_- \\
      &\geq \int_{A} \int_{-2}^0 (w-\phi_3)_+ \partial_{\e}^{\alpha} (w-\phi_3)_- \\
      &\geq \int_{A} \int_{-2}^0 (w-\phi_3)_- c\mu \lambda^2
    \end{aligned}
   \]
  and thus 
   \[
    \int_{A} \int_{-2}^0 (w-\phi_3)_+ \leq C  \frac{\lambda^4}{\mu}.  
   \]
  We utilize Tchebyschev one more time to get
   \[
    \begin{aligned}
     |\{w-\phi_4>0\}\cap (A \times [-2,0])| &\leq |\{w-\phi_3>\lambda^3 \} 
       \cap (B_2\times [-2,0])|  \\
                                            &\leq 
                               \lambda^{-3} \int_{A \times [-2,0]} {(w-\phi_3)_+} \\
                                            &\leq C\frac{\lambda}{\mu},
    \end{aligned}
   \]
  so we finally obtain
   \[
    |\{w-\phi_4>0\}\cap (A \times [-2,0])| \leq C\frac{\lambda}{\mu}.
   \]
  We choose $\lambda$ small enough so that $C\frac{\lambda}{\mu} < \delta/2$. 
  Combining this estimate with \eqref{e:A} we have the desired inequality: 
   \begin{equation}   \label{e:measbound}
    |\{w>\phi_4\} \cap (B_2 \times [-2,0]) | \leq \delta.
   \end{equation}
 \end{proof}

\section{Proof of the Holder regularity}  \label{s:Holder}
 In this section we prove our main result. Since De Giorgi's lemmas were proven independent of $\e$, the 
 conclusions hold in the limit. Therefore we may prove the results for solutions to \eqref{e:w}; 
 however,
 the proofs can be given for analogous results of the discretized solutions to \eqref{e:firstd} (see
 Remark \ref{r:mod}).
 For $\lambda$ as in the previous section we define
  \[
    \psi_{\tau, \lambda}(t,x) :=
      ((|x|- \lambda^{-4/ \sigma})^{\tau}-1)_+   \chi_{\{|x|\geq \frac{1}{\lambda^{4/\sigma}} \}}  +
      ((|t|- \lambda^{-4/ \alpha})^{\tau}-1)_+   \chi_{\{|t|\geq \frac{1}{\lambda^{4/\alpha}} \}}. 
  \]  

 \begin{lemma}  \label{l:decrease}
  There exists $\tau_0$ and $\lambda^*$ such that if for any solution to \eqref{e:w} in 
  $[a,0] \times \R^n$ with $|f|\leq \lambda^4$ and $a\leq -4$ such that if
   \[
    -2-\psi_{\tau, \lambda}  \leq w \leq 2+\psi_{\tau, \lambda} ,
   \]
  we have 
   \[
    \sup_{[-1,0] \times B_1} w - \inf_{[-1,0]\times B_1} w \leq 4 - \lambda^*.
   \]
 \end{lemma}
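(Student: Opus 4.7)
The plan is to execute the classical De Giorgi decrease-of-oscillation scheme: first apply the second De Giorgi lemma (Lemma \ref{l:second}) to get a measure bound on the super-level set of $w$ near the maximum, and then apply the first De Giorgi lemma (Corollary \ref{c:rewrite}) to a rescaled and shifted version of $w$ to push the maximum strictly below $2$.

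First, since the hypothesis $-2 - \psi_{\tau, \lambda} \leq w \leq 2 + \psi_{\tau, \lambda}$ is symmetric under $w \mapsto -w$ and $f \mapsto -f$, after possibly replacing $w$ by $-w$ I may assume
\[
|\{w \leq 0\} \cap ((-3,-2) \times B_1)| \geq \mu_0 := \tfrac{1}{2} |(-3,-2) \times B_1|.
\]
Let $\lambda$ be the constant from Lemma \ref{l:second} corresponding to $\mu = \mu_0$, and choose $\tau_0$ small enough (depending on $\lambda, \sigma, \alpha$) so that the hypothesis $w \leq 2 + \psi_{\tau, \lambda}$ entails the hypothesis $w \leq 2 + \psi_{\lambda^3}$ needed for Lemma \ref{l:second}. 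In the box $(-3,-2) \times B_1$ one has $\psi_{\lambda^3} = 0$ and $F_1 = F_2 = -1$, so $\phi_0 = 0$ there and $\{w \leq 0\} \subset \{w < \phi_0\}$, which verifies the measure hypothesis of Lemma \ref{l:second} with $\mu = \mu_0$. Applying the lemma then yields
\[
|\{w > \phi_4\} \cap ((-2, 0) \times B_2)| \leq \delta,
\]
and since $\phi_4 = 2 - 2\lambda^4$ on $(-2,0) \times B_2$, this reads $|\{w > 2 - 2\lambda^4\} \cap ((-2, 0) \times B_2)| \leq \delta$.

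Next, define
\[
v(t, x) := \frac{w(t, x) - (2 - 2\lambda^4)}{2\lambda^4}.
\]
By linearity of \eqref{e:w} (constants have zero Caputo derivative and vanish in the nonlocal spatial operator), $v$ solves \eqref{e:w} with right-hand side $f/(2\lambda^4)$, whose $L^\infty$ norm is at most $\tfrac{1}{2} \leq 1$. Moreover
\[
\{v > 0\} = \{w > 2 - 2\lambda^4\}, \qquad v(t, x) \leq 1 + \frac{\psi_{\tau, \lambda}(t, x)}{2\lambda^4}.
\]
Shrinking $\tau_0$ further if necessary, I arrange $\psi_{\tau, \lambda}/(2\lambda^4) \leq \bar\psi$ pointwise on $[a, 0] \times \R^n$; this reduces to elementary polynomial comparisons using $\tau_0 < \sigma/4$ and the location of the starting radius $\lambda^{-4/\sigma}$, noting that $\bar\psi$ already achieves value $\lambda^{-1} - 1$ at $|x| = \lambda^{-4/\sigma}$. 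All hypotheses of Corollary \ref{c:rewrite} are then met, and it yields $v \leq 1/2$ on $[-1, 0] \times B_1$, i.e.\ $w \leq 2 - \lambda^4$ on that cylinder. Since $\psi_{\tau, \lambda}$ vanishes on $[-1,0] \times B_1$, the lower bound forces $w \geq -2$ there, so
\[
\sup_{[-1,0] \times B_1} w - \inf_{[-1,0] \times B_1} w \leq (2 - \lambda^4) - (-2) = 4 - \lambda^4,
\]
and $\lambda^* = \lambda^4$ suffices.

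The main technical obstacle is the twofold tail-matching behind the two choices of $\tau_0$: one must be small enough that $\psi_{\tau, \lambda}$ fits underneath $\psi_{\lambda^3}$ (for Lemma \ref{l:second}) and simultaneously, after multiplication by $(2\lambda^4)^{-1}$, underneath $\bar\psi$ (for Corollary \ref{c:rewrite}). The delicate range is $\lambda^{-4/\sigma} \leq |x| \leq \lambda^{-12/\sigma}$, where $\psi_{\lambda^3}$ has not yet "switched on" while $\psi_{\tau, \lambda}$ has; closing this may require either an auxiliary space-time rescaling adapted to $\lambda$, or a sharper variant of Lemma \ref{l:second} tolerant of a slowly growing intermediate tail. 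Once this algebraic comparison is settled, the rest of the argument is routine linear rescaling and invocation of the two De Giorgi lemmas.
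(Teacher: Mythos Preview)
Your approach matches the paper's exactly: symmetry reduction via $w\mapsto -w$, apply Lemma~\ref{l:second}, shift and rescale by a power of $\lambda$, then apply Corollary~\ref{c:rewrite}. The paper fixes $\tau$ by the single requirement $(|x|^\tau-1)_+/\lambda^4\le(|x|^{\sigma/4}-1)_+$ (and its time analogue), which after translating by the common shift gives $\lambda^{-4}\psi_{\tau,\lambda}\le\psi_\lambda\le\overline\psi$ and disposes of your second tail comparison in one line. As for the obstacle you flag---that $\psi_{\tau,\lambda}$ is already positive on $\lambda^{-4/\sigma}<|x|<\lambda^{-12/\sigma}$ where $\psi_{\lambda^3}$ still vanishes---the paper's own chain of inequalities writes $\psi_{\tau,\lambda^3}$ rather than $\psi_{\tau,\lambda}$, which strongly suggests the barrier in the hypothesis is meant to carry the same shift $\lambda^{-12/\sigma}$ as $\psi_{\lambda^3}$; under that reading the obstacle disappears as soon as $\tau\le\min(\sigma/4,\alpha/4)$, and no auxiliary rescaling or sharper variant of Lemma~\ref{l:second} is needed.
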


 \begin{proof}
  We fix $\tau>0$ depending on $\lambda,\sigma,\alpha$ such that
   \[
    \frac{(|x|^\tau -1)_+}{\lambda^4} \leq (|x|^{\sigma /4}-1)_+  
    \quad \text{  and  } \quad \frac{(|t|^\tau -1)_+}{\lambda^4} \leq (|t|^{\alpha /4}-1)_+.
   \]
  Without loss of generality we assume that
   \[
    |\{w<\phi_0\} \cap ((-3,-2) \times B_1)| > \mu. 
   \]
  Otherwise the inequality is verified by $-w$. From Lemma \ref{l:second} 
   \[
    |\{w>\phi_4\} \cap ((-2,0)\times B_2)| \leq \delta.
   \]
  We define $\overline{w}(t,x):= \lambda^{-4}(w-2(1-\lambda^4))$.
  Since $\lambda$ was chosen so that 
   \[
    \phi_4(t,x)=2-2\lambda^4 \text{  for  } (t,x) \in [-2,0] \times B_2,
   \]
  we have 
   \[
    |\{\overline{w}>0\} \cap ([-2,0]\times B_2)| \leq 
    |\{w>\phi_4\} \cap ((-2,0)\times B_2)| \leq \delta.
   \]
  Also,
   \[
    \overline{w} \leq 2+ \frac{\psi_{\tau,\lambda^3}(x,t)}{\lambda^4} \leq 2+\psi_{\lambda^3} \leq 2 \psi_1
    \leq 2+\overline{\psi}.
   \]
  Furthermore, $\overline{w}$ satisfies \eqref{e:w} with right hand side $|f|\leq 1$. 
  Then we may apply Corollary \ref{c:rewrite} to $\overline{w}$, and conclude $\tilde{w}\leq 1/2$ on 
  $(-1,0) \times B_1$. Hence
   \[
    w(t,x) \leq 2 - \frac{3}{2}\lambda^4  \text{  for  }  (t,x) \in [-1,0] \times B_1.
   \]
 \end{proof}

 We now prove the Holder regularity
  \begin{theorem}  \label{t:holder}
   Let $w$ be a solution to \eqref{e:w} with $f \in L^{\infty}$. Then $w$ is Holder continuous. 
  \end{theorem}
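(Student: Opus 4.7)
The plan is to iterate Lemma \ref{l:decrease} on a geometric sequence of anisotropic parabolic cylinders
\[
Q_r(t_0,x_0) := (t_0 - r^{\sigma/\alpha}, t_0) \times B_r(x_0)
\]
centered at an arbitrary interior point, thus obtaining a geometric decrease of oscillation which implies H\"older continuity in the natural anisotropic metric $|x|+|t|^{\alpha/\sigma}$, and hence separately in each variable.

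After translating so that the center is $(0,0)$, using the boundedness provided by Lemma \ref{l:degiorgi1}, exploiting the linearity of \eqref{e:w} and the fact that the Caputo derivative annihilates constants, I may multiply $w$ by a small constant and subtract a constant to reduce to $\|w\|_{L^\infty(\R^{n+1})}\le 1$ and $\|f\|_{L^\infty}\le \lambda^4$, where $\lambda,\lambda^*,\tau=\tau_0$ are the parameters furnished by Lemma \ref{l:decrease} (with $\mu$ fixed in $(0,1/8)$). Set $\rho := 1-\lambda^*/4 \in (0,1)$ and pick $r\in(0,1)$ small enough that $r^\tau\le\rho$ and $r\le \lambda^{4/\sigma}/2$. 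The aim is to construct inductively sequences $m_k\le M_k$ with $M_k-m_k = 4\rho^k$ such that $m_k\le w\le M_k$ on $Q_{r^k}$; granted this, any $(t,x)\in Q_1\setminus\{0\}$ lies in $Q_{r^k}\setminus Q_{r^{k+1}}$ for some $k$, whence
\[
|w(t,x)-w(0,0)|\le 4\rho^k\le C(|x|+|t|^{\alpha/\sigma})^{\gamma},\qquad \gamma=\log(1/\rho)/\log(1/r),
\]
and a covering of the interior by such base points concludes the theorem.

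The inductive step is the heart of the argument, and the main obstacle. Assuming the bounds at levels $0,\dots,k$, rescale
\[
w_k(t,x):=\frac{2}{M_k-m_k}\Bigl(w(r^{k\sigma/\alpha}t,\,r^k x)-\tfrac{M_k+m_k}{2}\Bigr).
\]
Under $\partial_t^\alpha\mapsto r^{k\sigma}\partial_t^\alpha$ and the matching rescaling of the nonlocal spatial operator, $w_k$ satisfies \eqref{e:w} with a kernel $K_k$ inheriting the same ellipticity bounds and right-hand side of size at most $r^{k\sigma}\rho^{-k}\|f\|_\infty/2\le\lambda^4/2$ (using $r^\tau\le\rho$ and $\tau\le\sigma$). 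The delicate point is the pointwise tail bound $-2-\psi_{\tau,\lambda}(t,x)\le w_k(t,x)\le 2+\psi_{\tau,\lambda}(t,x)$ on all of $[-4,0]\times\R^n$: on the shell in new coordinates corresponding to $Q_{r^{k-j}}\setminus Q_{r^{k-j+1}}$ in original coordinates ($1\le j\le k$), the prior bounds give $|w_k|\le 2\rho^{-j}$, while $(t,x)$ has anisotropic norm of order $r^{-j}$, which by the choice $r\le\lambda^{4/\sigma}/2$ exceeds the threshold $\lambda^{-4/\sigma}$, so $\psi_{\tau,\lambda}(t,x)\gtrsim r^{-j\tau}$; the required inequality $2\rho^{-j}\le 2+\psi_{\tau,\lambda}$ then reduces to $r^\tau\le\rho$, which holds by construction. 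Outside all the cylinders the crude normalization $\|w\|_\infty\le 1$ combined with the growth of $\psi_{\tau,\lambda}$ at large scales suffices. Once the hypotheses are checked, Lemma \ref{l:decrease} applied to $w_k$ gives $\mathrm{osc}_{Q_1}w_k\le 4-\lambda^*$, which in original coordinates reads $\mathrm{osc}_{Q_{r^{k+1}}}w\le 4\rho^{k+1}$, producing $(m_{k+1},M_{k+1})$ and closing the induction.
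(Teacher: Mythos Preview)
Your approach is the same as the paper's: iterate Lemma~\ref{l:decrease} on geometrically shrinking anisotropic cylinders, checking at each step that the rescaled, recentered function still lies between $\pm(2+\psi_{\tau,\lambda})$. The paper defines the iterates recursively as $w_{k+1}=(1-\lambda^*/4)^{-1}\bigl(w_k(\gamma^{\sigma/\alpha}t,\gamma x)-\bar w_k\bigr)$ and simply asserts the barrier holds ``by construction''; you instead work with nested intervals $[m_k,M_k]$ and spell out the shell-by-shell tail estimate, which is a welcome elaboration of a point the paper leaves implicit.

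Two computational slips to fix. Your normalization should be $4/(M_k-m_k)$, not $2/(M_k-m_k)$: Lemma~\ref{l:decrease} is calibrated so that the hypothesis $|w|\le 2$ on $[-1,0]\times B_1$ yields $\mathrm{osc}\le 4-\lambda^*$; with your factor you only get $|w_k|\le 1$ there, so the conclusion $\mathrm{osc}\le 4-\lambda^*$ is vacuous and the induction does not advance. With the correct factor the decay reads $\mathrm{osc}_{Q_{r^k}}w\le 4\rho^{k+1}$ (the bound is on $Q_{r^k}$, not $Q_{r^{k+1}}$ as you wrote, though the weaker statement follows), and on the innermost shell $j=1$ you now have $|w_k|\le 2\rho^{-1}>2$ while $\psi_{\tau,\lambda}$ may still vanish (the lower edge of that shell has anisotropic norm $1$, below the threshold $\lambda^{-4/\sigma}$). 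This is handled in the standard way by taking $r$ smaller, depending on the finitely many shells needed before $\psi_{\tau,\lambda}$ kicks in and the growth condition $r^\tau\le\rho$ takes over; a routine adjustment that does not affect the substance of your argument.
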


  \begin{proof}
   Let $(t_0,x_0) \in (a , \infty) \times \R^n$. We assume that $(t_0 -a)>4$, otherwise we may rescale and
   have a new norm depending on the rescaling. We translate to the origin by considering
    \[
     w_0(t,x):= w(t_0 + t, x_0 + x). 
    \]
   Now we consider $\gamma<1$ such that
    \[
     \frac{1}{1-(\lambda^* /2)} \psi_{\tau,\lambda}(x) \leq \psi_{\tau,\lambda}(x),
     \text{  for  }  |x|\geq 1/\gamma.
    \]
   $\gamma$ only depends on $\lambda, \lambda^*, \tau$. We define by induction:
    \[
     \begin{aligned}
      &w_1(t,x) = \frac{w_0(t,x)}{\| w_0 \|_{L^{\infty}}+ \lambda^4\| f \|_{L^{\infty}}} , 
           (t,x) \in (a,0) \times \R^n, \\
      &w_{k+1}(t,x) = \frac{1}{1-\lambda^* /4}
      (w_k(\gamma^{\sigma / \alpha}t,\gamma x)-\overline{w}_k)
      , (t,x) \in (a\gamma^{-\sigma k},0) \times \R^n,
     \end{aligned}
    \]
   where
    \[
     \overline{w}_k := \frac{1}{|B_1|} \int_{-1}^0 \int_{B_1} w_k(t,x) dx dt.
    \]
   By construction, $w_k$ satisfies the hypothesis of Lemma \ref{l:decrease} for any $k$. So we conclude
    \[
     \sup_{t_0 +(-\gamma^{\sigma / \alpha},0) \times (x_0 + B_{\gamma^k})} w 
     -  \inf_{t_0 +(-\gamma^{\sigma / \alpha},0) \times (x_0 + B_{\gamma^k})} w
      \leq C(1-\lambda^* /4)^k,
    \]
   We then conclude that $w$ is $C^{\beta}$ with 
    \begin{equation}  \label{e:beta}
     \beta = \frac{\ln (1-\lambda^* /4)}{\ln \gamma^{\sigma / \alpha}} .
    \end{equation}
  \end{proof}

 \begin{remark}  \label{r:mod}
  The same methods work in this section to prove that if $w_{\e}$ is a solution to \eqref{e:firstd},
  then
   \begin{equation}  \label{e:mod}
    | w_{\e}(x,t)-w_{\e}(y,s) | \leq C\left( |t-s|^{\beta} + |x-y|^{\beta} \right)
   \end{equation}
  for $|t-s|,|x-y|>\rho(\e)$ where $\rho(\e)$ is a modulus of continuity with $\rho(\e) \to 0$
  as $\e \to 0$. The constant $C$ will be dependent on the distance from $t$ and $s$ to the initial point $a$. 
 \end{remark}

\section{Higher Regularity in Time}  \label{s:highertime}
In this section we show higher regularity in time if the right hand side $f$ is regular. 
A well established method - which we will follow - is to apply Theorem 
\ref{t:holder} to the difference quotient of a solution $w \in C^{0,\beta}$ in time. 
It will then follow that $w \in C^{0,2\beta}$ in time. Applying this method a finite number of times we will
obtain that $w \in C^{1,\beta}$ in time. See \cite{cc95} for an illustration of this method. 
There are two issues to resolve
before applying this method. The first is a translation of $w$ has a different initial point; therefore,
the difference of two functions does not satisfy an equation of type \eqref{e:w}. The second issue is seen
by formally differentiating the equation \eqref{e:first}
to obtain that $w'$ satisfies
\[
 ^{RL}D_t^{\alpha} w'(t,x) - \mathcal{L}(w'(x,t)) = f'(t).
\]
an equation involving the Reimann-Liouville derivative, so it is not obvious that one may continue
this iteration process past $C^{1,\beta}$. Both issues are overcome by considering $w(x,t)\eta(t)$ where
$\eta(t)$ is a smooth cut-off function in time that is zero until some point after the initial time. 
$w\eta$ will satisfy \eqref{e:w} and/or \eqref{e:firstd} 
with a different right hand side. 
Furthermore, $_{-\infty}\partial_t^{\alpha}(w\eta)= _{a}\partial_t^{\alpha}(w\eta)$. Then any translation
of $w\eta$ will have the same initial starting point at $-\infty$. Also, the Reimann-Liouville and Caputo derivatives
are the same at $-\infty$ if $w\eta(t)\to 0$ as $t \to -\infty$. 
 
 Given Theorem \ref{t:holder} it is still not immediate that $u \in H^{\alpha}$; 
 therefore, we may not assume that $(w-\psi)_+$ is an 
 acceptable test function. As before we must then approximate with the step functions. 
 
 In this section we utilize a cut-off function $\eta \in C^{\infty}$ with the following properties: 
 $\eta(t)$ is increasing, $\eta(t)=0$ for $t<1/2$, $\eta(t)=1$ for $t>1$.   
 \begin{lemma}  \label{l:step0}
  Let $w$ be a solution to \eqref{e:firstd} in $[0,1]\times \R^n$ with $f \in C^{\beta}$ and initial point $a=0$. 
  Then $w\eta$ is a solution
  to \eqref{e:firstd} in $(\infty, T)$ with right hand side $\tilde{f}\in L^{\infty}$ satisfying,
   \[
    |\tilde{f}(t) - \tilde{f}(s)| \leq C |t-s|^\beta
   \]
  as long as $|t-s|> \rho(\e)$ for $\rho(\e)$ as in Remark \ref{r:mod}. 
 \end{lemma}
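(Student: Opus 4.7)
The plan is to expand $\partial_\e^\alpha(w\eta)$ with a discrete product rule, identify the resulting source term $\tilde f$, and then verify its $L^\infty$ and H\"older bounds. Since $\eta$ depends only on $t$, the algebraic identity $(w\eta)(\e j) - (w\eta)(\e i) = \eta(\e j)[w(\e j)-w(\e i)] + w(\e i)[\eta(\e j) - \eta(\e i)]$ inserted into \eqref{e:dc} gives
\[
\partial_\e^\alpha (w\eta)(\e j, x) = \eta(\e j)\,\partial_\e^\alpha w(\e j, x) + C_\e(w, \eta)(\e j, x),
\]
where the commutator is
\[
C_\e(w, \eta)(\e j, x) := \e\alpha \sum_{i < j} \frac{w(\e i, x)\,[\eta(\e j) - \eta(\e i)]}{(\e(j-i))^{1+\alpha}}.
\]
Substituting \eqref{e:firstd} for $\partial_\e^\alpha w$ and using that $\eta$ does not depend on $x$ (so that $\eta(\e j)[w(\e j, y) - w(\e j, x)] = (w\eta)(\e j, y) - (w\eta)(\e j, x)$), I obtain that $w\eta$ solves \eqref{e:firstd} with $\tilde f(\e j, x) = \eta(\e j)f(\e j, x) + C_\e(w, \eta)(\e j, x)$. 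Since $\eta \equiv 0$ on $(-\infty, 1/2]$, the product $w\eta$ extends by zero below the initial time, so the equation may be viewed on all of $(-\infty, T)$ with trivial data at $-\infty$.

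The $L^\infty$ bound is straightforward: $|\eta f| \leq \|f\|_{L^\infty}$, and for the commutator I use $\|w\|_{L^\infty} < \infty$ (a consequence of the De Giorgi lemmas of Sections \ref{s:lemma1}--\ref{s:lemma2}) together with $|\eta(\e j) - \eta(\e i)| \leq \min\{\|\eta'\|_\infty\, \e(j-i),\, 1\}$. Splitting the sum at $\e(j-i) = 1$ and comparing each piece to integrals as in Lemma \ref{l:extend}, the near-diagonal part is controlled by $\int_0^1 r^{-\alpha}\,dr$ and the tail by $\int_1^\infty r^{-1-\alpha}\,dr$, both convergent.

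For the H\"older estimate, the contribution from $\eta f$ is in $C^\beta$ by the hypothesis $f \in C^\beta$ and the smoothness of $\eta$. The main work is the commutator. For discrete times $t = \e j > s = \e k$ with $t - s > \rho(\e)$, I split
\[
C_\e(w,\eta)(t) - C_\e(w,\eta)(s) = I_{\mathrm{near}} + I_{\mathrm{far}},
\]
where $I_{\mathrm{near}}$ sums over $k \leq i < j$ and $I_{\mathrm{far}}$ over $i < k$. In $I_{\mathrm{near}}$ the bound $|\eta(t) - \eta(\e i)| \leq C(t - \e i)$ yields $|I_{\mathrm{near}}| \leq C|t-s|^{1-\alpha}$ by direct summation. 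In $I_{\mathrm{far}}$ I expand the bracket via Taylor's theorem in $t$, using $\eta \in C^2$ to control the discrete derivative of $(\eta(r) - \eta(\e i))(r - \e i)^{-1-\alpha}$ by $C(r - \e i)^{-1-\alpha}$. Splitting further according to whether $s - \e i \leq t - s$ or $s - \e i > t - s$, each regime contributes at most $C|t-s|^{1-\alpha}$, so altogether $|C_\e(w, \eta)(t) - C_\e(w, \eta)(s)| \leq C|t - s|^{1-\alpha}$. Since the exponent $\beta$ produced in \eqref{e:beta} is smaller than $1 - \alpha$ by construction, this gives the required bound $C|t-s|^\beta$.

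The main obstacle is the commutator estimate, which must balance the near-diagonal singularity of the kernel against its tail decay while keeping the finite-sum expressions compatible with their continuous analogues. The restriction $|t - s| > \rho(\e)$ in the statement reflects the resolution of the step-function extensions used in Section \ref{s:discretize}: on smaller scales the discrete functions are piecewise constant and so cannot obey a continuous-type H\"older bound. This scale is inherited from Remark \ref{r:mod} for $w_\e$ and passes through the commutator to $\tilde f$.
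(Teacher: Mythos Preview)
Your product-rule decomposition and the identification of $\tilde f=\eta f+C_\e(w,\eta)$ match the paper. The divergence is in the H\"older estimate for the commutator. The paper first applies the mean value theorem to write $[\eta(\e j)-\eta(\e i)]/(\e(j-i))^{1+\alpha}=\tilde\eta(\e i)/(\e(j-i))^{\alpha}$ with $\tilde\eta$ smooth and supported in $[1/2,1]$, so that the commutator becomes a discrete fractional integral of $\tilde\eta\,w$. After the reindexing $i\mapsto j-i$, the difference at $\e j$ and $\e(j+h)$ is bounded by invoking the $C^\beta$ modulus of $w$ supplied by Remark~\ref{r:mod}; this is precisely where the scale restriction $|t-s|>\rho(\e)$ enters, and it is what makes the output exponent equal to the input $\beta$.

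Your route uses only $w\in L^\infty$ together with $\eta\in C^2$ to obtain $C_\e(w,\eta)\in C^{1-\alpha}$, and you then close by asserting that the $\beta$ of \eqref{e:beta} is smaller than $1-\alpha$ ``by construction.'' That assertion is the gap: nothing in \eqref{e:beta} forces $\beta<1-\alpha$, and more to the point the lemma is meant to be reused in the bootstrap of Theorem~\ref{t:smooth} with the effective exponent $k\beta$ climbing past $1-\alpha$ on the way to Lipschitz regularity. At those later steps a commutator bound that caps at $C^{1-\alpha}$ no longer delivers $\tilde f\in C^{k\beta}$, whereas the paper's argument---which feeds in whatever H\"older regularity $w$ currently enjoys---scales with the iteration. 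A secondary symptom of the missing ingredient is your account of $\rho(\e)$: an estimate based solely on $\|w\|_{L^\infty}$ needs no such scale restriction, so its appearance in the statement remains unexplained by your proof; in the paper it is inherited directly from the use of Remark~\ref{r:mod} on $w$.
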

 
 \begin{proof}
  We note that 
   \[
    w(\e j) \eta(\e j) - w(\e i) \eta(\e i) 
     = \eta(\e j) \left(w(\e j) - w(\e i)\right) + w(\e i) \left(\eta(\e j) - \eta (\e i) \right).
   \]
  From the mean value theorem
   \[
    \frac{\eta(\e j) - \eta (\e i)}{(\e(j-i))^{1+\alpha}} = \frac{\tilde{\eta}(\e i)}{(\e(j-i))^{\alpha}},
   \]
  with $\tilde{\eta}$ a smooth function. Combining the above two inequalities we obtain
   \[
    _{-\infty} \partial_{\e}^{\alpha} (w\eta)(\e j) = \eta(\e j)\partial_{\e}^{\alpha}w(\e j) 
     + \e \sum_{i<j}\frac{\tilde{\eta}(\e i) w(\e i)}{(\e(j-i))^{\alpha}}. 
   \] 
  Then $w\eta$ satisfies \eqref{e:firstd} with right hand side
   \[
    \tilde{f}= f - \e \sum_{i<j}\frac{\tilde{\eta}(\e i) w(\e i)}{(\e(j-i))^{\alpha}}. 
   \]
  Now for $\e h > \rho(\e)$ 
   \[
    \begin{aligned}
    &\frac{\e}{h^{\beta}} \sum_{0<i<j+h}\frac{\tilde{\eta}(\e i) w(\e i)}{(\e(j+h-i))^{\alpha}} 
     - \frac{\e}{h^{\beta}} \sum_{0<i<j}\frac{\tilde{\eta}(\e i) w(\e i)}{(\e(j-i))^{\alpha}} \\
     &= \e \sum_{0<i<j}\frac{\tilde{\eta} w(\e (j+h-i)) - \tilde{\eta}w(\e(j-i))}{h^{\beta}(\e i)^{\alpha}} 
      + \e \sum_{j\leq j+h} \frac{\tilde{\eta} w(\e (j+h-i))}{h^{\beta}(\e i)^{\alpha}} 
     &\leq C
    \end{aligned}
   \]
  where $C$ depends on the modulus of continuity of $w$ from Remark \ref{r:mod} but is independent of $\e$. 
  In fact the very last term goes to zero as $\e \to 0$ since $\psi(t)=0$
  for $t\leq 1/2$. The last inequality proves the Lemma.  
 \end{proof}
 
 In the next Lemma we assume for simplicity and transparency of the proof that the kernel $K$ is time-independent.
 However, this assumption is not necessary if $K$ is sufficiently smooth in time. 
 
 \begin{lemma}   \label{l:step1}
  Let $w$ be a solution to \eqref{e:firstd} in $[0,T]\times{\R^n}$ with right hand side $f \in C^{\beta}$. There
  exists a modulus of continuity $\rho(\e)$ with $\rho(\e) \to 0$ as $\e \to 0$, such that
  if $|t-s|>\rho(\e)$, then 
   \begin{equation}  \label{e:step1}
    |w(t,x)- w(s,x)| \leq C|t|^{2\beta}
   \end{equation}
  where $C$ depends on the distance from $t$ and $s$ to the initial point $0$. 
 \end{lemma}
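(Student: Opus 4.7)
The plan is to improve the temporal H\"older exponent from $\beta$ to $2\beta$ by feeding a rescaled time increment of $w\eta$ back into Theorem \ref{t:holder} (via its discrete form, Remark \ref{r:mod}). First I would invoke Lemma \ref{l:step0}: after multiplication by the cutoff $\eta$, the function $w\eta$ solves \eqref{e:firstd} on $(-\infty,T)\times\R^n$ with a modified right-hand side $\tilde f$ which is H\"older-$\beta$ in time (modulo the scale $\rho(\e)$). Because $K$ is time-independent, both the spatial bilinear form and the fractional discrete derivative $_{-\infty}\partial_\e^{\alpha}$ are translation-invariant in time on $(-\infty,T)$. Thus for any $h>\rho(\e)$ on the lattice $\e\Z$, the translated function $(w\eta)(\cdot+h,\cdot)$ again solves \eqref{e:firstd} with right-hand side $\tilde f(\cdot+h,\cdot)$.

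By linearity, the scaled time increment
\[
V_h(t,x) := \frac{(w\eta)(t+h,x)-(w\eta)(t,x)}{h^{\beta}}
\]
solves \eqref{e:firstd} with right-hand side $F_h(t,x) := (\tilde f(t+h,x)-\tilde f(t,x))/h^{\beta}$. By Lemma \ref{l:step0} the quantity $\|F_h\|_{L^\infty}$ is bounded uniformly in $h>\rho(\e)$. Moreover, applying Theorem \ref{t:holder} together with Remark \ref{r:mod} to $w\eta$ itself already gives $|(w\eta)(t+h,x)-(w\eta)(t,x)|\le C h^{\beta}$ for $h>\rho(\e)$, so $V_h$ is also uniformly bounded in $L^\infty$.

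Next I would apply Theorem \ref{t:holder} and Remark \ref{r:mod} to $V_h$: this yields
\[
|V_h(t,x)-V_h(s,x)|\le C|t-s|^{\beta}\quad\text{whenever }|t-s|>\rho(\e),
\]
uniformly in $h$. Unwinding the definition of $V_h$ gives the Zygmund-type second-difference estimate
\[
|(w\eta)(t+h,x)-(w\eta)(t,x)-(w\eta)(s+h,x)+(w\eta)(s,x)|\le C\,h^{\beta}|t-s|^{\beta}.
\]
Specializing $s=t-h$ and performing a standard telescoping/Campanato dyadic argument converts this into the H\"older-$2\beta$ bound $|(w\eta)(t,x)-(w\eta)(s,x)|\le C|t-s|^{2\beta}$ for $|t-s|>\rho(\e)$ (assuming $2\beta<1$; otherwise the conclusion is interpreted as a Zygmund regularity statement to be iterated further). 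Since $w\eta=w$ for $t\ge 1$ and the constants are independent of $\e$, passing $\e\to 0$ recovers the claimed estimate for $w$, with a constant depending only on the distance of $t,s$ to the initial time $0$ (through the decay properties of $\eta$ and the dependence in Theorem \ref{t:holder}).

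The principal obstacle is the careful bookkeeping that $V_h$ actually satisfies the discretized equation with right-hand side $F_h$: this requires $h\in\e\Z$, together with the time-translation invariance of $_{-\infty}\partial_\e^\alpha$ and of the spatial operator, and the cancellation of initial contributions once the cutoff $\eta$ has been inserted. A secondary difficulty is to ensure that the modulus $\rho(\e)$ appearing in the H\"older estimate for $V_h$ is the same as, or controlled by, the one governing $F_h$ in Lemma \ref{l:step0}, so that the final bound is uniform in $\e$. The Zygmund-to-H\"older step is classical and not expected to cause trouble.
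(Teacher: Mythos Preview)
Your proposal is correct and follows essentially the same approach as the paper: form the $\beta$-normalized time difference quotient of $\eta w$, observe (via Lemma~\ref{l:step0} and time-translation invariance) that it solves \eqref{e:firstd} with bounded right-hand side and is itself bounded, then apply Remark~\ref{r:mod} to gain an additional $\beta$ exponent and conclude by the standard difference-quotient-to-H\"older argument. The paper simply cites \cite{cc95} for the last conversion step and is terser throughout, but the argument is the same.
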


 \begin{proof}
  We consider the difference quotient
   \[
    v:=\frac{\eta w(\e(j+h)) - \eta w(\e j)}{(\e h)^\beta}.
   \]
  From Remark \ref{r:mod} $v \in L^{\infty}$. From Lemma \ref{l:step0} $v$ satisfies \eqref{e:firstd}
  with an $L^{\infty}$ right hand side. Then from Remark \ref{r:mod} $v$ satisfies \eqref{e:mod}. 
  Then from \cite{cc95} $v$ satisfies \eqref{e:step1} with modulus $2\rho(\e)$. To obtain higher
  H\"older continuity closer to the initial time we simply scale and use the 
  alternative cut-off $\eta(Mt)$ with $M>1$. 
 \end{proof}
 
 We now show that like the heat equation there is instantaneous smoothing in time away from the initial
 start time. 
 \begin{theorem}  \label{t:smooth}
  Let $w$ be a solution to \eqref{e:w} in $[0,T]\times \R^n$ with right hand side $f \in C^{k,\beta}$, and kernel
  $K(x,y)$ independent of time. Then 
  $w \in C^{k,2\beta}$ away from the initial time $0$. 
 \end{theorem}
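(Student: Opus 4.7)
The plan is to proceed by induction on $k$, with the base case $k=0$ being precisely Lemma \ref{l:step1}. For the inductive step I would bootstrap via first-order difference quotients in time, exploiting the two features that make the equation translation-invariant in the relevant sense: the cutoff trick from Lemma \ref{l:step0}, which lets us replace $_0\partial_t^{\alpha}$ by $_{-\infty}\partial_t^{\alpha}$, and the hypothesis that $K$ is time-independent.

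Concretely, I would fix a smooth cutoff $\eta$ with $\eta \equiv 0$ on $(-\infty, 1/2]$ and $\eta \equiv 1$ on $[1,\infty)$ and set $\tilde{w} := \eta w$. By the argument underlying Lemma \ref{l:step0}, $\tilde{w}$ satisfies \eqref{e:w} with starting point $-\infty$ and modified right hand side
\[
 \tilde{f}(t,x) = \eta(t) f(t,x) + R(t,x),
\]
where the remainder $R$ is a linear integral operator applied to $w$ with kernel built from smooth derivatives of $\eta$ and is supported in $\{1/2 \le t \le 1\}$; in particular $\tilde{f}$ has the same time regularity as $f$ on $\{\eta \equiv 1\}$, which is all that matters for the conclusion we want.

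For the inductive step, assume the theorem for $k-1$ and take $f \in C^{k,\beta}$. Consider the first-order difference quotient $v_h(t,x) := h^{-1}[\tilde{w}(t+h,x) - \tilde{w}(t,x)]$. Because $K$ is time-independent and $_{-\infty}\partial_t^{\alpha}$ commutes with time translations, $v_h$ satisfies \eqref{e:w} on $(-\infty, T-h)$ with right hand side $h^{-1}[\tilde{f}(\cdot+h) - \tilde{f}(\cdot)]$, which lies in $C^{k-1,\beta}$ uniformly in $h$ by the mean value theorem applied to $\tilde{f}\in C^{k,\beta}$. Applying the inductive hypothesis to $v_h$ then gives $v_h \in C^{k-1,2\beta}$ in time uniformly in $h$, and passing to the limit $h \to 0$ yields $\partial_t \tilde{w} \in C^{k-1,2\beta}$, hence $w \in C^{k,2\beta}$ in time on $[\delta,T]$ for any $\delta>0$. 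Replacing $\eta$ by $\eta(Mt)$ for large $M$, as at the end of the proof of Lemma \ref{l:step1}, moves the cutoff arbitrarily close to the initial time.

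The main technical obstacle is that a priori $w$ is only Hölder continuous, so $v_h$ is not automatically an admissible test function in the weak formulation \eqref{e:w}. As in Lemmas \ref{l:step0} and \ref{l:step1}, the remedy is to run the whole argument first on the discrete-in-time approximations $w_\e$ solving \eqref{e:firstd} — where the difference quotient is literally a finite difference on $\e\Z$ — and then pass to the limit $\e \to 0$ using the uniform modulus of continuity from Remark \ref{r:mod}. One also has to check at each induction level that $v_h$ remains bounded in $L^\infty$ uniformly in $h$: at the base step this comes from Lemma \ref{l:step1}, and at each subsequent step it follows from the previous level of the induction. Once these uniform bounds are established, the joint limit in $\e$ and $h$ (taken in this order) and the standard difference-quotient bootstrap give the claim.
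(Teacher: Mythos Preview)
Your overall strategy --- cut off with $\eta$ to start at $-\infty$, exploit time-independence of $K$, and bootstrap via difference quotients on the discrete approximations --- is the same as the paper's. But there is a genuine gap in the execution of the inductive step.

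You form the \emph{first-order} difference quotient $v_h(t,x) = h^{-1}[\tilde w(t+h,x)-\tilde w(t,x)]$ and claim it is uniformly bounded in $L^\infty$ ``at the base step \dots\ from Lemma~\ref{l:step1}.'' This fails: Lemma~\ref{l:step1} (and your base case $k=0$) only gives $\tilde w\in C^{0,2\beta}$ in time, so $|v_h|\le C h^{2\beta-1}$, which blows up as $h\to 0$ whenever $2\beta<1$. Since $\beta$ is the (small) H\"older exponent coming from the De Giorgi argument \eqref{e:beta}, you cannot assume $2\beta\ge 1$. Consequently you cannot apply Theorem~\ref{t:holder} (or the inductive hypothesis) to $v_h$, because the $L^\infty$ bound on the solution is what controls the constants in the oscillation-decay iteration.

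The paper closes this gap by first bootstrapping with \emph{fractional}-order difference quotients, in the spirit of \cite{cc95}: it applies Lemma~\ref{l:step1} inductively to
\[
 v = \frac{\eta w_{\e}(\e(j+h)) - \eta w_{\e}(\e j)}{(\e h)^{m\beta}},
\]
which is bounded after the $m$-th iteration precisely because the previous step showed $\eta w_\e\in C^{0,m\beta}$. Repeating finitely many times produces a Lipschitz-in-time bound uniform in $\e$; only then does the paper pass to the first-order quotient $h^{-1}[\eta w(t+h)-\eta w(t)]$, at which point cut-offs become admissible test functions and one can differentiate the equation to get the $C^{1,\beta}$ (and higher) conclusion. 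Inserting this preliminary fractional bootstrap before your $k=1$ step would repair your argument.
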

 
 \begin{proof}  
  As in \cite{cc95} we may apply Lemma \ref{l:step1} inductively  to the difference quotient
   \[
    v:=\frac{\eta w_{\e}(\e(j+h)) -\eta w_{\e}(\e j)}{(\e h)^{k\beta}}
   \]
  to the approximations $w_{\e}$.,
  to obtain the required ``$C^{0,(k+1)\beta}$'' regularity for $v$. We may do this finitely many times 
  to obtain that 
   \[
    |\eta w_{\e}(t,x) - \eta w_{\e}(s,x)| \leq C|t-s|
   \]
  with $C$ independent of $\e$, and for $|t-s|>\rho(\e)$ for some new modulus of continuity 
  with $\rho(\e)\to 0$ as $\e \to 0$. Then letting $\e \to 0$ we obtain $w$
  is ``Lipshitz'' and hence cut-offs of $\eta w$ are valid test functions. 
  Then we may apply the method in Lemma \ref{l:step1} directly to 
   \[
    v= \frac{\eta w(t+h,x) - \eta w(t,x)}{h},
   \] 
  and conclude $w \in C^{1,\beta}$ and $w$ satisfies \eqref{e:firstd} with right hand side $f'$. 
  Since $w\eta$ is actually a strong solution - in time - to \eqref{e:e} then the difference quotients will also 
  be such solutions. Then $w'$ is also a solution to \eqref{e:e}, and so we may utilize cut-offs of
  $w'$ as test functions and continue with the bootstrapping process. 
 \end{proof}
 
 In order to show that a solution $w$ to \eqref{e:w} is a strong - in time - solution, we require some 
 regularity of $w(0,x)$ to ensure that $w$ is continuous up to the initial time.
 
 \begin{lemma}  \label{l:initime}
  Let $w$ be a solution to \eqref{e:w} in $Q=[0,T]\times \R^n$. 
  Let $w(0,x)\in C^{0,\sigma}(\R^n) \ (C^{2\sigma -1}(\R^n))$ if $\sigma \leq 1 \ (\sigma>1)$. 
  Then $w \in C^{0,\beta}([0,T]\times \R^n)$ with $\beta$ as defined in \eqref{e:beta}. 
 \end{lemma}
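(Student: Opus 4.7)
\emph{Proof plan.} My plan is to obtain H\"older continuity up to the initial time $t=0$ by subtracting the time-independent extension of the initial data and controlling the remainder via a suitable barrier; the interior regularity from Theorem \ref{t:holder} will then take care of everything away from $t=0$.

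First, I would set $\bar w(t,x):=w(0,x)$ for all $t\geq 0$. Since the Caputo derivative of a time-independent function vanishes, $\partial_t^{\alpha}\bar w\equiv 0$. Writing $u:=w-\bar w$, the function $u$ solves \eqref{e:w} with zero initial data and modified right-hand side $\tilde f:=f+\mathcal{L}\bar w$, where $\mathcal{L}$ denotes the nonlocal spatial operator associated with $K$. The preliminary technical step is to verify $\mathcal{L}\bar w\in L^{\infty}(\R^n)$. By the symmetry of $K$,
\[
 \mathcal{L}\bar w(x)=\tfrac12\int_{\R^n}K(x,y)\bigl[w(0,y)+w(0,2x-y)-2w(0,x)\bigr]\,dy.
\]
For $\sigma\leq 1$ with $w(0,\cdot)\in C^{0,\sigma}$, the second-order difference is $O(|y-x|^{\sigma})$, giving a locally integrable integrand $O(|y-x|^{\sigma-n})$. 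For $\sigma>1$ with $w(0,\cdot)\in C^{2\sigma-1}$ (i.e.\ $C^{1,2\sigma-2}$), the first-order Taylor terms cancel by symmetry and the remainder is $O(|y-x|^{2\sigma-1})$, giving integrand $O(|y-x|^{\sigma-1-n})$, again integrable since $\sigma>1$. Combined with the far-field decay of $K$ and the bound $w(0,\cdot)\in L^{\infty}$, this yields $\tilde f\in L^{\infty}(\R^n)$.

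Next, for each fixed $x_0\in\R^n$, I would construct a barrier of the form
\[
 \phi(t,x) := C\bigl(\min\{|x-x_0|^{\sigma},M\}+t^{\alpha}\bigr)
\]
with $C,M$ sufficiently large. Using $\partial_t^{\alpha}(t^{\alpha})=\Gamma(1-\alpha)\Gamma(1+\alpha)$, which is a positive constant, together with a bound on $\mathcal{L}(\min\{|x-x_0|^{\sigma},M\})$ obtained by the same second-order-difference argument as above, one checks that $\phi$ is a supersolution of the $u$-equation as soon as $C$ dominates $\|\tilde f\|_{\infty}$ and $\|u\|_{\infty}$; similarly $-\phi$ is a subsolution. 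A comparison argument then gives $|u(t,x)|\leq\phi(t,x)$. Since pointwise comparison is not directly available for the very weak formulation \eqref{e:w}, this step would be carried out at the level of the discretized scheme from Section \ref{s:discretize}: testing the difference of a discrete supersolution and $u_\epsilon$ against its positive part yields the desired bound via the energy identity, in the spirit of the proof of Lemma \ref{l:degiorgi1}, and the continuous bound follows by passing to the limit $\epsilon\to 0$ using Remark \ref{r:mod}.

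Finally, combining $|u(t,x)|\leq C(|x-x_0|^{\sigma}+t^{\alpha})$ with the initial-data estimate $|w(0,x)-w(0,x_0)|\leq C|x-x_0|^{\sigma}$ produces the desired H\"older bound at $(0,x_0)$, and glueing with the interior $C^{0,\beta}$ estimate from Theorem \ref{t:holder} yields $w\in C^{0,\beta}([0,T]\times\R^n)$, since the iteration parameters in \eqref{e:beta} can be arranged so that $\beta\leq\min(\sigma,\alpha)$. The main obstacle is the comparison step: the weak formulation \eqref{e:w} does not admit a direct pointwise comparison principle, so the inequality $|u|\leq\phi$ must be handled carefully through the discretized scheme and an energy argument. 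A secondary technicality is ensuring that the spatial truncation in $\phi$ is compatible with the supersolution property globally, in particular across the truncation radius.
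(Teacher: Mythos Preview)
Your approach shares with the paper the essential observation that the regularity assumption on $w(0,\cdot)$ makes $\mathcal{L}w(0,\cdot)\in L^{\infty}$, but the paper exploits this in a considerably simpler way. Rather than subtracting $\bar w$ and building a barrier, the paper simply \emph{extends} $w$ and $K$ backwards in time as constants, $w(t,x)=w(0,x)$ and $K(t,x,y)=K(0,x,y)$ for $t<0$. The extended $w$ is then a solution of \eqref{e:w} on $[-5,T]\times\R^n$ with new right-hand side $F(t,x)=f(t,x)+\chi_{\{-5\le t\le 0\}}\int_{\R^n}K(0,x,y)[w(0,x)-w(0,y)]\,dy$, which is in $L^{\infty}$ by the hypothesis on $w(0,\cdot)$. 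Since $t=0$ is now an interior time, Theorem~\ref{t:holder} applies directly and gives the $C^{0,\beta}$ estimate up to $t=0$ with no further work. This completely sidesteps the two obstacles you identify: no comparison principle is needed, and no barrier has to be checked across a truncation radius.

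One technical point in your write-up deserves correction. The identity
\[
 \mathcal{L}\bar w(x)=\tfrac12\int_{\R^n}K(x,y)\bigl[w(0,y)+w(0,2x-y)-2w(0,x)\bigr]\,dy
\]
uses the substitution $y\mapsto 2x-y$ and requires $K(x,y)=K(x,2x-y)$, i.e.\ that $K$ depend only on $x-y$. The symmetry assumed in the paper is only $K(t,x,y)=K(t,y,x)$, which does not give this. For $\sigma>1$ the cancellation of the first-order Taylor term therefore cannot be obtained by your symmetrization; one must argue differently (e.g.\ via a principal-value interpretation, or by imposing additional structure on $K$). The paper is admittedly terse at this same point, but your explicit formula is not valid under the stated hypotheses.
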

 
 \begin{proof}
  We extend $w$ and $K$ backwards in time as $w(x,t)=w(0,t)$ 
  and $K(t,x,y)=K(0,x,y)$ if $t<0$. Then $w(x,t)$ is a solution
  to \eqref{e:w} in $[-5,T]\times \R^n$ with new right hand side
   \[
    F(x,t)=f(x,t)+ \chi_{\{-5\leq t\leq 0\}}\int_{\R^n}K(0,x,y)[w(0,x)-w(0,y)] \ dy
   \]
  which is in $L^{\infty}(\R^n)$ by the regularity assumptions on $w$ and also 
  since the original right hand side $f \in L^{\infty}$. Then by Theorem \ref{t:holder}
  we conclude the result. 
 \end{proof}
 
 \begin{corollary}  \label{c:strong}
  Let $w$ be a solution to \eqref{e:w} with kernel $K$ time-independent and right hand side $f \in C^{0,\alpha}$.
  Assume further that $w(0,x)\in C^{0,\sigma} \ (C^{2\sigma-1})$ if 
  $\sigma \leq 1 \ (\sigma>1)$. Then $w$ is a strong solution in time, i.e. $w$ is a solution to \eqref{e:e}.
 \end{corollary}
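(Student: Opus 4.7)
The plan is to combine Lemma~\ref{l:initime} with Theorem~\ref{t:smooth} to upgrade the weak solution's regularity to the point that one may reverse the integration-by-parts of Proposition~\ref{p:changev} in time, thereby converting \eqref{e:w} into \eqref{e:e}.

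First, by Lemma~\ref{l:initime}, the initial regularity assumption on $w(0,\cdot)$ yields $w \in C^{0,\beta}([0,T]\times \R^n)$ globally. Next, since $K$ is time-independent and $f \in C^{0,\alpha}$, the bootstrap scheme in the proof of Theorem~\ref{t:smooth} applies: after finitely many doublings of the Hölder exponent in time one reaches Lipschitz regularity of $w$ in $t$, and then the final step of that proof yields $w \in C^{1,\alpha'}$ in time on $[\delta,T]\times \R^n$ for every $\delta>0$. With this regularity the pointwise Caputo derivative
\[
 \partial_t^\alpha w(t,x) = \frac{w(t,x)-w(0,x)}{t^\alpha} + \alpha \int_0^t \frac{w(t,x)-w(s,x)}{(t-s)^{1+\alpha}}\,ds
\]
is classically defined for every $t>0$: the boundary term is bounded because $w \in C^0$ up to $t=0$, the piece of the integral over $[t/2,t]$ converges by Lipschitz continuity, and the piece over $[0,t/2]$ is controlled by the global boundedness of $w$.

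To recover \eqref{e:e}, fix a test function $\phi \in C_0^1(\R^n) \times C^1(0,T)$. The symmetric-in-time portion of \eqref{e:w} (the first four lines) is precisely the right-hand side of Proposition~\ref{p:changev} applied to $g=\phi$ and $h=w$. Since $w$ is $C^1$ in $t$ away from the origin, Proposition~\ref{p:changev} identifies this symmetric sum with $\int_0^T \phi\,\partial_t^\alpha w\,dt + \int_0^T w\,\partial_t^\alpha \phi\,dt$. Substituting into \eqref{e:w}, the $\int w\,\partial_t^\alpha \phi$ contribution cancels against the corresponding term already present in \eqref{e:w}, leaving exactly \eqref{e:e}.

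The main obstacle is that $w$ is only Hölder — not $C^1$ — in time at the initial instant $t=0$, so Proposition~\ref{p:changev} cannot be invoked directly. The natural remedy is to replace $\phi$ by $\phi\,\eta_\delta(t)$, where $\eta_\delta$ is a smooth temporal cutoff vanishing on $[0,\delta]$ and equal to $1$ for $t \ge 2\delta$. On the support of this truncated test function $w$ is $C^1$ in time, so Proposition~\ref{p:changev} applies unconditionally. One then sends $\delta\to 0$ and verifies that each term in \eqref{e:w} converges to its counterpart in \eqref{e:e}; the singular weights $(t-a)^{-\alpha}$ near $t=0$ are dominated using the $C^{0,\beta}$ regularity of $w$ supplied by Lemma~\ref{l:initime}, provided $\beta$ is chosen (via Theorem~\ref{t:smooth} if necessary) to exceed $\alpha$, which is the delicate point of the limit argument.
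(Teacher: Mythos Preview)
Your proposal is correct and follows the same scheme as the paper: invoke Lemma~\ref{l:initime} for continuity up to $t=0$, Theorem~\ref{t:smooth} for $C^1$ regularity in time away from $t=0$, then insert a temporal cutoff and send it to the Heaviside function. The only organizational difference is that you multiply the \emph{test function} $\phi$ by $\eta_\delta$ and appeal to Proposition~\ref{p:changev}, whereas the paper multiplies the \emph{solution} $w$ by $\eta$ and uses the product rule $\partial_t^\alpha(w\eta)=\eta\,\partial_t^\alpha w+\eta(t)w(0)t^{-\alpha}+\alpha\int_0^t w(s)[\eta(t)-\eta(s)](t-s)^{-1-\alpha}\,ds$; the latter is marginally cleaner because $w\eta$ is genuinely $C^1$ on all of $[0,T]$, so no extension of Proposition~\ref{p:changev} beyond its stated $C^1$ hypothesis is needed. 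Your flagged ``delicate point'' that one might need $\beta>\alpha$ is overly cautious: the error term $\int_0^t w(s)[\eta(t)-\eta(s)](t-s)^{-1-\alpha}\,ds$ (or its dual in your formulation) vanishes in the limit using only the boundedness of $w$ near $t=0$ together with the Lipschitz bound on $\eta$, and the paper accordingly invokes only continuity of $w$ up to the initial time.
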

 
 \begin{proof}
  From Theorem \ref{t:smooth} and Lemma \ref{l:initime}, $w \in H^{\alpha}$ away from the initial time $0$ 
  and is continuous up to the initial time. Therefore we compute as before
  \[
   \begin{aligned}
   &\int_{\R^n} \phi \partial_t^{\alpha} (w \eta) + \mathcal{B}(w \eta , \phi) =
     \eta(t) \int_{\R^n} f(t,x) \phi(t,x) \\
     &\quad  + \eta(t) \int_{\R^n} \frac{\eta(t)w(0,x)}{t^{\alpha}}
     + \alpha \int_{\R^n} \int_0^t \frac{w(s,x)[\eta(t,x)-\eta(s,x)]}{(t-s)^{1+\alpha}} \ ds \ dt \ dx. 
   \end{aligned}
  \]
  We let $\eta(t) $ approach the heavy-side function by scaling $\eta(Mt)$ and letting $M \to \infty$. 
  By the continuity of $w$ up to the initial time $0$, the last term goes to zero as $M \to \infty$. 
  Moving the term involving $w(0)$ to the left hand side and integrating over $[0,T]$
  we obtain \eqref{e:e}.
 \end{proof}
 
 We may also answer a question of uniqueness. 
 \begin{corollary}  \label{c:unique}
  Let $w_1,w_2$ be two solutions to \eqref{e:w} with the same assumptions as in Corollary \ref{c:strong}.
  Assume further that $w_1(0,x)\equiv w_2(0,x)$. Then $w_1(t,x)\equiv w_2(t,x)$. 
 \end{corollary}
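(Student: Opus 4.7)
The plan is to test the equation satisfied by $v:=w_1-w_2$ against $v$ itself and exploit the coercivity of both the spatial bilinear form and the Caputo derivative. By the linearity of \eqref{e:w} and Corollary \ref{c:strong}, $v$ is a strong-in-time solution to \eqref{e:e} with zero right-hand side and $v(0,\cdot)\equiv 0$. Theorem \ref{t:smooth} provides $C^{1,\beta}$ regularity in time on any interval $[\delta,T]$, and Lemma \ref{l:initime} gives continuity all the way down to $t=0$; after localizing by a spatial cut-off which is then sent to infinity, $v$ is an admissible test function in the strong formulation.

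Testing against $v$ and integrating over $(0,T)\times\R^n$ yields
\[
\int_0^T\!\!\int_{\R^n} v\,\partial_t^\alpha v\,dx\,dt + \int_0^T\!\!\int_{\R^n}\!\!\int_{\R^n} K(t,x,y)\bigl(v(t,x)-v(t,y)\bigr)^2\,dx\,dy\,dt = 0.
\]
The double integral is nonnegative by the ellipticity of $K$. For the time piece, apply Proposition \ref{p:changev} with $g=h=v$ and use $v(0,x)=0$ to get, for each fixed $x$,
\[
\begin{aligned}
2\int_0^T v\,\partial_t^\alpha v\,dt &= \int_0^T v^2(t,x)\left[\frac{1}{(T-t)^\alpha}+\frac{1}{t^\alpha}\right]dt \\
&\quad + \alpha\int_0^T\!\!\int_0^t\frac{(v(t,x)-v(s,x))^2}{(t-s)^{1+\alpha}}\,ds\,dt.
\end{aligned}
\]
Each summand on the right is nonnegative, so the vanishing of the sum forces, in particular, $\int_0^T\!\int_{\R^n} v^2(t,x)\,t^{-\alpha}\,dx\,dt=0$, and hence $v\equiv 0$, which is the claim.

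The main obstacle is justifying that $v$ itself (rather than some smoothed approximation) is a legal test function. The cleanest workaround is to carry out the above energy computation at the level of the discretized approximations $v_\e:=w_{1,\e}-w_{2,\e}$, where Lemma \ref{l:decompose} delivers the same nonnegative lower bound as Proposition \ref{p:changev} but for the discrete Caputo derivative; every constant produced is $\e$-independent, so one passes to the limit $\e\to 0$ using Remark \ref{r:mod} and the uniform regularity from Theorem \ref{t:smooth}. Spatial integrability is handled by a smooth cut-off $\chi_R$; the cross terms produced by $B(\chi_R v,\chi_R v)-\chi_R^2\,B(v,v)$ are controlled by the $L^\infty$ bound on $v$ and the kernel bound in \eqref{e:elliptic}, and vanish as $R\to\infty$.
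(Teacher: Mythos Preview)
Your argument is correct and follows essentially the same route as the paper: set $v=w_1-w_2$, use Corollary~\ref{c:strong} to justify that $v$ is an admissible test function, apply Proposition~\ref{p:changev} with $g=h=v$ and $v(0,\cdot)=0$, and conclude from the nonnegativity of every term that $v\equiv 0$. The paper's proof is terser---it simply asserts that $v$ may be used as a test function without your additional discussion of discretization and spatial cut-offs---but the core energy identity and the conclusion are identical.
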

 
 \begin{proof}
  $v=w_1 - w_2$ is a solution. Furthermore, we may use $v=w_1 - w_2$ as a test function. Then from 
  Proposition \ref{p:changev}
   \[
    \begin{aligned}
     &\int_{\R^n}\int_{0}^T {v^2(t) \left[\frac{1}{(T-t)^{\alpha}} + \frac{1}{t^{\alpha}} \right]} \ dt \ dx
      + \int_{\R^n}\alpha \int_{0}^T \int_{0}^t \frac{(v(t)-v(s))^2}{(t-s)^{1+\alpha}} \ ds \ dt \ dx \\
     &\quad +\int_0^T \mathcal{B}(v,v) =0.
    \end{aligned}
   \]
  Then $v\equiv 0$.
 \end{proof}

\section{Appendix}
 Here in the appendix we provide the detail and computations that prove the existence of 
 a solution to the weak equation \eqref{e:w} via approximating solutions. For simplicity
 we write the operators in \eqref{e:w} and \eqref{e:firstd} as $\mathcal{H}$ and $\mathcal{H}_{\e}$
 respectively. 
 
 \begin{theorem}  \label{t:existence}
  For smooth bounded initial data , there exists a weak solution $w$ to \eqref{e:w}
  Furthermore, $w$ satisfies the estimates and conclusions of Lemma \ref{l:degiorgi1}, Corollary
  \ref{c:rewrite}, and Lemma \ref{l:second}.
 \end{theorem}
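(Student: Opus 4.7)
The plan is to construct discretized approximate solutions $w_\epsilon$ to \eqref{e:firstd} for each $\epsilon > 0$, establish $\epsilon$-independent estimates, and then pass to the limit $\epsilon \to 0$ to obtain a weak solution to \eqref{e:w}. First I would show existence of $w_\epsilon$ by recursion in $j$. Given the initial data (with $w(a+\epsilon j, x) = w(a,x)$ for $j \leq 0$), at each step $j \geq 1$ the relation \eqref{e:firstd} reduces to a nonlocal elliptic equation of the form
\[
  c_j w(a+\epsilon j,x) - \int K(a+\epsilon j,x,y)[w(a+\epsilon j,y)-w(a+\epsilon j,x)]\,dy = g_j(x),
\]
where $c_j = \alpha\epsilon\sum_{i<j}(\epsilon(j-i))^{-(1+\alpha)} > 0$ and $g_j$ depends only on $f$ and previously computed values. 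Because the zero-order coefficient $c_j$ is strictly positive and the integral operator is negative semi-definite, the bilinear form is coercive on $H^{\sigma/2}(\R^n)$, so existence and uniqueness of a bounded solution at each step follow from Lax--Milgram together with a standard truncation argument to handle the unbounded domain; smoothness of the initial data propagates to give $L^\infty$ bounds through a discrete maximum principle.

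Next I would derive $\epsilon$-independent a priori estimates. Testing \eqref{e:firstd} against $\epsilon w_\epsilon$ and applying Lemma \ref{l:decompose} furnishes a uniform discrete energy bound controlling the spatial $H^{\sigma/2}$ seminorm and the discrete time seminorm appearing on the right-hand side of \eqref{e:decompose}; Lemma \ref{l:extend} then translates these into uniform bounds on the step-function extensions of $w_\epsilon$ in the continuous fractional Sobolev spaces. Combined with the $L^\infty$ estimate this yields compactness in $L^2_{\text{loc}}$. The discrete De Giorgi arguments already carried out in Sections \ref{s:lemma1}--\ref{s:lemma2} (whose constants are explicitly $\epsilon$-independent) and the resulting discrete Hölder modulus \eqref{e:mod} from Remark \ref{r:mod} upgrade this to local equicontinuity on compact subsets of $(a,T] \times \R^n$, so by Arzelà--Ascoli and a diagonal argument a subsequence converges locally uniformly (and strongly in $L^2_{\text{loc}}$) to some $w$.

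Then I would pass to the limit in the weak formulation. Testing \eqref{e:firstd} against a smooth $\phi \in C_0^2(\R^n)\times C^1([a,T])$, multiplying by $\epsilon$, and summing in $j$ produces a discrete analogue of \eqref{e:w}; the discrete sum giving the fractional-time contribution is exactly the one-sided discretization underlying Lemma \ref{l:decompose}, and its Riemann-sum structure converges termwise to the five expressions appearing in Proposition \ref{p:changev} (and hence in \eqref{e:w}) by dominated convergence, using the uniform $L^\infty$ and fractional Sobolev bounds to control the singular kernel $(t-s)^{-1-\alpha}$. Convergence of the spatial bilinear form follows from the uniform $H^{\sigma/2}$ bound plus strong $L^2_{\text{loc}}$ convergence, and convergence of $\int f\phi$ is immediate from $f\in L^\infty$. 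The main obstacle is justifying the limit in the singular time kernel near $s=t$ and near the initial time $s=a$; this is handled by splitting the region of integration into a diagonal strip $\{t-\delta < s < t\}$ controlled by the uniform discrete energy and the Hölder modulus \eqref{e:mod}, and an off-diagonal region where the kernel is bounded and dominated convergence applies directly, with $\delta \to 0$ at the end.

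Finally, the assertion that $w$ inherits the conclusions of Lemma \ref{l:degiorgi1}, Corollary \ref{c:rewrite}, and Lemma \ref{l:second} is essentially automatic: each of those conclusions is a pointwise inequality on $w_\epsilon$ with thresholds and constants independent of $\epsilon$, so they survive the local uniform limit. The hypotheses (smallness of an $L^2$ integral, measure of a superlevel set) pass to the limit by strong $L^2_{\text{loc}}$ convergence and Fatou's lemma for characteristic functions of open sets, which suffices since the De Giorgi statements give strict improvements (e.g.\ $w \leq \tfrac12 + \psi$) that allow a small margin absorbing any loss in the limit.
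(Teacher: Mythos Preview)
Your proposal is correct and follows essentially the same architecture as the paper's proof: construct the discrete solutions $w_\epsilon$ recursively, harvest the $\epsilon$-independent energy and De~Giorgi estimates already proved in Sections~\ref{s:discretize}--\ref{s:lemma2}, extract a limit, and verify \eqref{e:w} by passing to the limit in each piece. The technical organization differs only in minor ways: the paper relies on weak $H^{\alpha/2}\times H^{\sigma/2}$ convergence together with strong $L^p$ convergence (from the Sobolev embedding in \eqref{e:step3}) rather than Arzel\`a--Ascoli, and it handles the singular time kernel by splitting at the moving scale $t-s=\epsilon^{1/3}$ instead of a fixed strip of width $\delta$; both choices achieve the same end.
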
 
 
 \begin{proof}
  Fix $\phi \in C_0^1((a,T)\times \R^n)$. There exists a sequence of solutions $w_{\epsilon}$ to 
  \eqref{e:firstd} with $\epsilon \to 0$. From the estimate \eqref{e:step1} we obtain the existence of
  $w$ such that 
   \[
     w_{\epsilon} \rightharpoonup w \in (H^{\alpha/2}(a,T) \times \R^n) \cup 
      ((a,T) \times H^{\sigma /2}(\R^n)). 
   \]
   and 
   \[
     w_{\epsilon} \to w \in L^p ((a,T)\times \R^n),
   \]
   for $p$ as defined in \eqref{e:p}. 
   We now label $\phi_{\e}(x,t)=\phi(x,\epsilon j)$ and $K_{\e}(x,t)=K(x,\epsilon j)$ if 
   $\epsilon (j-1) < t< \epsilon j$. $\mathcal{B}_{\e}$ is the bilinear
   form associated with $K_{\e}$. 
   We now show that for $\phi \in C_0^1((a,T)\times \R^n)$ 
   \begin{equation}  \label{e:sol}
    0=\mathcal{H}(w,\phi).
   \end{equation}
   We note that $\mathcal{H}(w,\phi)=\mathcal{H}(w,\phi)+ \mathcal{H}_{\epsilon}(w_{\epsilon},\phi)$, and
   we show that $\mathcal{H}(w,\phi)+ \mathcal{H}_{\epsilon}(w_{\epsilon},\phi) \to 0$. 
   We begin by showing that 
    \[
     \lim_{\e \to 0} \int_a^T \mathcal{B}_{\e}(w_{\e},\phi) \ dt 
     = \lim_{\e \to 0} \e \sum_{0<j\leq k} \mathcal{B}
      (w_{\e}(\e j, x), \phi(\e j,x))
    \]
   We first consider the region $|x-y| < M^{-1}$ for $M$ large. Now
    \[
     \begin{aligned}
      &\int_a^T \mathop{\int \int}_{|x-y|<M^{-1}} 
       K(x,y,t)(w_{\epsilon}(t,x)-w_{\epsilon}(t,y))(\phi(t,x)-\phi(t,y)) \\
         &\quad \leq \left(\int_a^T \mathcal{B}(w_{\epsilon},w_{\epsilon}) \right)^{1/2}
           \left(\int_a^T \mathop{\int \int}_{|x-y|< M^{-1}}K(x,y,t)(\phi(t,x)-\phi(t,y))^2 \right)^{1/2}.
     \end{aligned}
    \]
   Similarly,
    \[
     \begin{aligned}
      & \e \sum_{0<j\leq k} \mathop{\int \int}_{|x-y|<M^{-1}} 
       K(x,y,t)(w_{\e}(t,x)-w_{\e}(t,y))(\phi(t,x)-\phi(t,y)) \\
         &\leq \left(\int_a^T \mathcal{B}(w_{\e},w_{\e}) \right)^{1/2}
          \left( \mathop{\int \int}_{|x-y|<M^{-1}} 
       \Lambda|x-y|^{-n-\sigma}(\phi(\e j,x)-\phi(\e j,y))^2 \right)^{1/2}.
     \end{aligned}
    \]
   Both terms go to $0$ as $M \to \infty$ independent of $\e$.
   The second bound approaches the first as $\epsilon \to 0$. 
   For the moment we assume that $K$ is smooth for $|x-y|\geq M^{-1}$. Then since $K$ and $\phi$ are smooth 
    \[
     \begin{aligned}
     &\lim_{\e \to 0} \int_a^T \mathop{\int \int}_{|x-y|\geq M^{-1}}
          K(x,y,t)(w_{\e}(t,x)-w_{\e}(t,y))(\phi(t,x)-\phi(t,y)) \ dx \ dy \ dt \\
     &\quad = \lim_{\e \to 0} \int_a^T \mathop{\int \int}_{|x-y|\geq M^{-1}}
          K_{\e}(x,y,t)(w_{\e}(t,x)-w_{\e}(t,y))(\phi_{\e}(t,x)-\phi_{\e}(t,y)) \ dx \ dy \ dt 
     \end{aligned}
    \]  
   By approximating $K$ with kernels that are smooth for $|x-y|\geq M^{-1}$ we obtain 
    \[
     \begin{aligned}
     &\lim_{\e \to 0}\left|  \int_a^T \mathcal{B}(w_{\e},\phi) \ dt 
     - \epsilon \sum_{0<j\leq k} \mathcal{B}
      (w_{\e}(\e j, x), \phi(\e j,x)) \right| \\
      &\quad \leq C  \int_a^T  \mathop{\int \int}_{|x-y|< M^{-1}}K(x,y,t)(\phi(t,x)-\phi(t,y))^2.
     \end{aligned}
    \]
   We let $M \to \infty$. 
    
   We now focus on the pieces in time and first aim to show
    \[
     \begin{aligned}
     &\lim_{\e \to 0}\int_{\R^n} \int_0^T \int_0^t \frac{(w_{\e}(t)-w_{\e}(s))
       (\phi(t)-\phi(s))}{(t-s)^{1+\alpha}} \ ds \ dt \\
      &\quad = \lim_{\e \to 0}
      \mathop{\sum \sum}_{0\leq i<j\leq k} \int_{\e (j-1)}^{\e j} 
       \int_{\e (i-1)}^{\e i} \frac{(w_{\e}(\e j)-w_{\e}(\e i))
      (\phi(\e j)-\phi(\e i))}{(\e(j-i))^{1+\alpha}}.  \\
     \end{aligned}
    \]
   To do so we write $\phi(t)= \phi(t)-\phi_{e}(t)+\phi_{e}(t)$ and subtract the above two terms. 
   Since $\phi_{\e}(t) \to \phi(t)$ and $w_{\e} \rightharpoonup w$ in $H^{\alpha /2} \times \R^n$ we have that
    \[
      \lim_{\e \to 0} \left| \int_{\R^n} \int_0^T \int_0^t \frac{(w_{\e}(t)-w_{\e}(s))\left[(\phi(t)-\phi(s))
        -(\phi_{\e}(t)- \phi_{\e}(s))\right]}{(t-s)^{1+\alpha}} \ ds \ dt \right| \to 0.
    \]
   We now must show
    \begin{equation}  \label{e:1/3bound}
     \begin{aligned}
     0&= \lim_{\e \to 0}  \\
      &\mathop{\sum \sum}_{0\leq i<j\leq k} (w_{\e}(\e j)-w_{\e}(\e i))
      (\phi(\e j)-\phi(\e i)) 
       \int_{\e (j-1)}^{\e j} \int_{\e (i-1)}^{\e i} 
         \frac{1}{(t-s)^{1+\alpha}}-\frac{1}{(\e(j-i))^{1+\alpha}} 
     \end{aligned}
    \end{equation}
   We break up the integral over the sets $(t-s)\leq \e^{1/3}$ and $(t-s)> \e^{1/3}$.
   Another consequence of the strong and weak convergence of $\phi_{e}$ and $w_{e}$ is that 
    \[
     \begin{aligned}
     0 &= \lim_{\e \to 0} C \int_{\R^n} \mathop{\int \int}_{t-s\leq \e^{1/3}}
             \frac{|(w_{\e}(t)-w_{\e}(s))(\phi_{\e}(t)-\phi_{\e}(s))|}{(t-s)^{1+\alpha}} \\
       &\geq \lim_{\e \to 0} \int_{\R^n} \mathop{\sum \sum}_{\e (j-i) \leq \e^{1/3}}
         |(w_{\e}(\e j)-w_{\e}(\e i))(\phi(\e j)-\phi(\e i))| 
       \int_{\e (j-1)}^{\e j} \int_{\e (i-1)}^{\e i} 
       \frac{1}{(\e(j-i))^{1+\alpha}}
     \end{aligned}
    \]
   Now for $t-s>\epsilon^{1/3}$,
   and $\epsilon (i-1)\leq s \leq \epsilon i$ and $\epsilon (j-1)\leq t \leq \epsilon j$
    \[
     \begin{aligned}
     \left| (t-s)^{-(1+\alpha)} - (\epsilon(j-i))^{-(1+\alpha)}\right|
      &\leq (\epsilon^{1/3}-\epsilon)^{-(1+\alpha)} - (\epsilon^{1/3})^{-(1+\alpha)}\\
     &\leq (1+\alpha)(\epsilon^{1/3}-\epsilon)^{-(2+\alpha)}\epsilon \\
     &\leq (1+\alpha)(\epsilon^{1/3}/2)^{-(2+\alpha)}\epsilon \\
     &\leq C\epsilon^{(1-\alpha)/3}. 
     \end{aligned}
    \]
    and so \eqref{e:1/3bound} is holds. 
    
    The remaining pieces in time are handled similarly.

 \end{proof}

\bibliographystyle{amsplain}
\bibliography{refparabolic}

\providecommand{\bysame}{\leavevmode\hbox to3em{\hrulefill}\thinspace}
\providecommand{\MR}{\relax\ifhmode\unskip\space\fi MR }
\providecommand{\MRhref}[2]{%
  \href{http://www.ams.org/mathscinet-getitem?mr=#1}{#2}
}
\providecommand{\href}[2]{#2}
\begin{thebibliography}{1}

\bibitem{ccv11}
Luis Caffarelli, Chi~Hin Chan, and Alexis Vasseur, \emph{Regularity theory for
  parabolic nonlinear integral operators}, J. Amer. Math. Soc. \textbf{24}
  (2011), no.~3, 849--869. \MR{2784330 (2012c:45024)}

\bibitem{cc95}
Luis~A. Caffarelli and Xavier Cabr{\'e}, \emph{Fully nonlinear elliptic
  equations}, American Mathematical Society Colloquium Publications, vol.~43,
  American Mathematical Society, Providence, RI, 1995. \MR{1351007 (96h:35046)}

\bibitem{d04}
D.~del Castillo-Negrete, B.A. Carreras, and V.E. Lynch, \emph{Fractional
  diffusion in plasma turbulence}, Physics of Plasmas (2004).

\bibitem{d05}
\bysame, \emph{Nondiffusive transport in plasma turbulene: A fractional
  diffusion approach}, Physical Review Letters (2005).

\bibitem{mk00}
Ralf Metzler and Joseph Klafter, \emph{The random walk's guide to anomalous
  diffusion: a fractional dynamics approach}, Phys. Rep. \textbf{339} (2000),
  no.~1, 77. \MR{1809268 (2001k:82082)}

\bibitem{z13}
Rico Zacher, \emph{A {D}e {G}iorgi--{N}ash type theorem for time fractional
  diffusion equations}, Math. Ann. \textbf{356} (2013), no.~1, 99--146.
  \MR{3038123}

\bibitem{z02}
G.~M. Zaslavsky, \emph{Chaos, fractional kinetics, and anomalous transport},
  Phys. Rep. \textbf{371} (2002), no.~6, 461--580. \MR{1937584 (2003i:70030)}

\end{thebibliography}

\end{document}